\theoremstyle{plain}
\newtheorem{thm}{Theorem}[section]
\newtheorem{dfn}[thm]{Definition}
\newtheorem{lemma}[thm]{Lemma}
\newtheorem{prop}[thm]{Proposition}
\newtheorem{THM}{Theorem}
\theoremstyle{remark}
\newtheorem{remark}[thm]{Remark}
\newcommand{\mb}{\mathbb}
\newcommand{\mc}{\mathcal}
\newcommand{\C}{\mb C}
\newcommand{\F}{\mc F}
\newcommand{\II}{\mathrm{II}}
\DeclareMathOperator{\rank}{rank}
\DeclareMathOperator{\Sym}{Sym}
\DeclareMathOperator{\Grad}{Grad}
\DeclareMathOperator{\ord}{ord}
\newcommand{\EGG}[2][k,m]{\mathrm{E^{GG}_{#1}}(#2)}
\newcommand{\ED}[2][k,m]{\mathrm{E_{#1}}(#2)}
\numberwithin{equation}{section}
\numberwithin{equation}{section}       
\title{Extactic divisors for webs and lines on projective surfaces}
\author{Maycol Falla Luza}
\author{Jorge Vit\'orio Pereira}
\date{\today}
\begin{document}
\begin{abstract}
Given a web (multi-foliation) and a linear system on a projective surface we construct
divisors cutting out the locus where some element of the linear system has
abnormal contact with the leaf of the web. We apply these ideas to reobtain a classical result
by Salmon on the number of lines on a projective surface. In a different vein, we
investigate the number of lines and of disjoint lines contained in a projective surface and
tangent to a contact distribution.
\end{abstract}

\maketitle

\setcounter{tocdepth}{1}

\tableofcontents

\section{Introduction}

\subsection{Extactic divisors}
A smooth point $x$ of a (germ of) plane curve $C \subset \mathbb P^2$ is an extactic point of order $n$ if there exists a plane curve of degree $n$
which intersects $C$ with multiplicity $h^0(\mathbb P^2, \mathcal O_{\mathbb P^2}(n))$ at $x$. For example inflection points are the  extactic points of order one, and the
extactic points of order two are the sextactic points. In the literature the extactic points of order $n$ are also called $n$-flexes.

In \cite{MR1860669} one can find a construction of divisors on $\mathbb P^2$
attached to a foliation $\F$  which intersect the leaves of $\F$ precisely at the extactic points.
These are called the extactic divisors of $\F$. The $n$-th extact divisor of $\F$ is defined whenever the general leaf of $\F$
is not contained in an algebraic curve of degree at most $n$. Of course, if a curve $C$ is contained in an algebraic curve
of degree $d$ then every point of $C$ is a extactic point of order $n\ge d$. In particular, the $n$-th extactic divisor
of a foliation $\F$ contains all $\F$-invariant algebraic curves of degree at most $n$.  The extactic divisors
proved to be useful in the study of the Liouvillian integrability of polynomial differential equations, see for instance \cite{MR2776495}
and  \cite{MR2276503}.

In Section  \ref{S:extactic}  we revisit the construction of
extactic divisors  and reformulate it
using the language of invariant jet differentials.
Moreover, we show how to extend this construction to webs on surfaces. It is worth noting that the inflection divisor for webs on $\mathbb P^2$ was treated before by the first
author in his Phd thesis.

\begin{THM}\label{THM:degree}
Let $\mathcal{W}$ be a $d$-web of degree $r$ on $\mathbb{P}^2$. If the number of  algebraic curves of degree at most $n$  invariant by $\mathcal W$ is finite
then the $n$-th extactic divisor of $\mathcal W$ has degree
$$
\frac{n}{8}\cdot \left[  (n+1)(n+2)(4d + (n+3)(r-d)) + (n+3)(n^2 + 3n-2)(d-1)(d+2r)  \right].
$$
\end{THM}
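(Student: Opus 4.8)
The plan is to linearize the web by passing to the surface on which it becomes an ordinary foliation, compute the relevant Wronskian there, and push the result back down to $\Pj^2$. Let $S\subset\Pj(T\Pj^2)$ be the closure of the graph of $\mc W$, i.e.\ the incidence variety of pairs $(x,[v])$ with $v$ tangent to a local leaf of $\mc W$ at $x$, and let $\pi\colon S\to\Pj^2$ be the projection, a finite morphism of degree $d$. The tautological foliation on $\Pj(T\Pj^2)$ restricts to a foliation $\hat{\mc F}$ on $S$ whose leaves project onto the leaves of $\mc W$, so that $\pi^{*}|\mc O_{\Pj^2}(n)|$ plays on $S$ the role the linear system plays on $\Pj^2$. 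By the construction of Section~\ref{S:extactic}, the extactic divisor of $\mc W$ is the push-forward under $\pi$ of the extactic divisor of $\hat{\mc F}$ relative to $\pi^{*}|\mc O(n)|$, after discarding the spurious component forced along the ramification locus of $\pi$ (see below). The finiteness hypothesis on $\mc W$-invariant curves of degree $\le n$ guarantees that all these objects are honest divisors.

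Write $N+1=h^{0}(\Pj^2,\mc O(n))=\binom{n+2}{2}$, so that $N=\tfrac{n(n+3)}{2}$. The invariant jet differential computing the extactic divisor of $\hat{\mc F}$ is a Wronskian-type determinant $W=\det(\nabla^{\,i}f_j)_{0\le i,j\le N}$, where $\nabla\colon \pi^{*}\mc O(n)\to\pi^{*}\mc O(n)\otimes K_{\hat{\mc F}}$ is the derivative along $\hat{\mc F}$ and the $f_j$ form a basis of $\pi^{*}|\mc O(n)|$; thus $W$ is a section of $\pi^{*}\mc O(n)^{\otimes(N+1)}\otimes K_{\hat{\mc F}}^{\otimes N(N+1)/2}$, and its divisor class is
\begin{equation*}
[W]=(N+1)\,\pi^{*}\mc O(n)+\tfrac{N(N+1)}{2}\,K_{\hat{\mc F}}.
\end{equation*}
To extract a degree I intersect with $C:=\pi^{*}\ell$, the preimage of a general line $\ell$. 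The elementary intersections are $\pi^{*}\mc O(n)\cdot C=nd$ and $C^{2}=d$, while $K_{\hat{\mc F}}\cdot C$ is read off from $\mathrm{tang}(\hat{\mc F},C)=K_{\hat{\mc F}}\cdot C+C^{2}$: a general line is tangent to $\mc W$ at exactly $r$ points, and over each of the $\deg\Delta$ points of $\ell\cap\Delta$ (with $\Delta$ the discriminant of $\mc W$) the curve $C$ meets the ramification of $\pi$ in a simple tangency of $\hat{\mc F}$. This gives $\mathrm{tang}(\hat{\mc F},C)=r+\deg\Delta$, hence $K_{\hat{\mc F}}\cdot C=r+\deg\Delta-d$.

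The degree of the discriminant comes from the symmetric $d$-differential defining $\mc W$. Regarding $\mc W$ as a section of $\Sym^{d}\Omega^{1}_{\Pj^2}\otimes N_{\mc W}$ with $N_{\mc W}=\mc O_{\Pj^2}(r+2d)$ (the normalization for which a product of $d$ foliations of degrees $r_1,\dots,r_d$ is a $d$-web of degree $\sum r_i$), the classical discriminant of a binary $d$-form, homogeneous of degree $2(d-1)$ in the coefficients and of weight $d(d-1)$, is a section of $N_{\mc W}^{\otimes 2(d-1)}\otimes(\det\Omega^{1}_{\Pj^2})^{\otimes d(d-1)}$, so that $\deg\Delta=2(d-1)(r+2d)-3d(d-1)=(d-1)(d+2r)$. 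It remains to handle the spurious component. Along $R:=\pi^{-1}(\Delta)$ the two colliding directions of $\mc W$ force the leaves of $\hat{\mc F}$ to acquire ordinary cusps, so that in a leaf parameter the pulled-back forms of degree $n$ realize exactly the value semigroup $\langle 2,3\rangle=\{0,2,3,4,\dots\}$; the contact orders of a general pencil are then $0,2,3,\dots,N+1$, and $W$ vanishes along $R$ to order $\sum_{i=0}^{N}(a_i-i)=N$. Removing $N\,R$ and using $R\cdot C=\deg\Delta$ yields
\begin{equation*}
\deg E_{n}(\mc W)=([W]-N R)\cdot C=(N+1)nd+\tfrac{N(N+1)}{2}(r-d)+\left(\tfrac{N(N+1)}{2}-N\right)\deg\Delta.
\end{equation*}

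Substituting $N=\tfrac{n(n+3)}{2}$, so that $(N+1)nd=\tfrac{n(n+1)(n+2)}{2}d$, that $\tfrac{N(N+1)}{2}=\tfrac{n(n+1)(n+2)(n+3)}{8}$, and that $\tfrac{N(N+1)}{2}-N=\binom{N}{2}=\tfrac{n(n+3)(n^{2}+3n-2)}{8}$, together with $\deg\Delta=(d-1)(d+2r)$, and collecting terms reproduces the stated expression. The delicate point, deserving the most care, is the determination of the spurious multiplicity $N$ along $R$: it rests on identifying the cusp type of the leaves of $\hat{\mc F}$ over the discriminant and on showing that a general element of $\pi^{*}|\mc O(n)|$ attains the generic contact sequence $0,2,3,\dots,N+1$, which is exactly where the finiteness hypothesis and a genericity argument for the linear system must be invoked; some attention is likewise owed to confirming the tangency count $r+\deg\Delta$ at the ramification points.
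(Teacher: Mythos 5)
Your route---lifting $\mathcal{W}$ to the foliation $\hat{\mathcal{F}}$ on its graph $S\subset\mathbb{P}(T\mathbb{P}^2)$, computing the Wronskian class there, and pushing down---is genuinely different from the paper's, and in the generic situation your numbers do check out: $K_{\hat{\mathcal{F}}}\cdot\pi^*\ell=r+\deg\Delta-d$, spurious order $N$ along $R$, and the final substitution reproduce the stated formula. The genuine gap is the very first claim on which everything rests: that the extactic divisor of $\mathcal{W}$ equals the push-forward of $\mathrm{div}(W)$ ``after discarding the spurious component'' $NR$. This is asserted, not proved, and with the definition actually in force it is false in the generality of the theorem. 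The $n$-th extactic divisor is by definition the zero divisor of $\Delta(v)^{k(k-1)/2}\cdot\sigma_W(v)$ (your $N$ is this $k$); Lemma \ref{L:actionwebsW} only guarantees that the polar divisor of $\sigma_W(v)$ is at most $\frac{k(k-1)}{2}(\Delta(v))_0$, and whenever that bound is not attained the extactic divisor retains discriminant components which your construction throws away. A concrete case covered by the theorem: a decomposable web $v=v_1v_2$ with $v_1,v_2$ generic foliations of degrees $r_1,r_2$ (no invariant algebraic curves at all, so the finiteness hypothesis holds). Then $\sigma_W(v)=\sigma_W(v_1)\sigma_W(v_2)$ is holomorphic, $\Delta(v)$ is the \emph{square} of the tangency divisor $T$ of the two foliations, and $\mathcal{E}_n(\mathcal{W})=\mathcal{E}_n(\mathcal{F}_1)+\mathcal{E}_n(\mathcal{F}_2)+k(k-1)T$, whose degree is the theorem's formula; your divisor is $\mathcal{E}_n(\mathcal{F}_1)+\mathcal{E}_n(\mathcal{F}_2)$, smaller by $k(k-1)(r+1)$. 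So even if every step of your computation were justified, it computes the degree of a different divisor.

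Relatedly, each geometric input of your computation needs genericity that the finiteness hypothesis does not provide: $S$ smooth and irreducible, $\Delta$ reduced, not $\mathcal{W}$-invariant, with exactly two directions colliding transversally---this is what underlies $\pi_*R=\Delta$, the tangency count $r+\deg\Delta$, the ordinary cusps, and the contact sequence $0,2,3,\dots,N+1$ giving spurious order exactly $N$. Already a local factor of type $\partial_y^2-y^3\partial_x^2$ (indecomposable, non-reduced discriminant, singular graph) breaks several of these at once, yet such webs satisfy the hypothesis. This is precisely what the paper's argument avoids: by Lemma \ref{L:degdisc} and Lemma \ref{L:actionwebsW}, the product $\Delta(v)^{k(k-1)/2}\sigma_W(v)$ is, for \emph{every} web, a holomorphic section of the explicit line bundle $\mathcal{O}_{\mathbb{P}^2}(n)^{\otimes(k+1)d}\otimes\mathcal{O}_{\mathbb{P}^2}(r-d)^{\otimes m}\otimes\mathcal{O}_{\mathbb{P}^2}((d-1)(d+2r))^{\otimes k(k-1)/2}$, and the stated degree is nothing but the degree of this bundle; no geometry of the graph and no genericity enter. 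To salvage your approach you would need to prove the push-forward identification and redo the local analysis over arbitrary (non-reduced, invariant, or multi-branch) discriminant components, which amounts to re-proving Lemma \ref{L:actionwebsW} upstairs.
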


As in \cite{MR2536234}, we carry the construction of the extactic divisor on arbitrary surfaces and for arbitrary linear systems.
As an application we give in Section \ref{S:Salmon} a  proof of a classical result of Salmon
which provides a bound for the number of lines on
projective surfaces contained in $\mathbb P^3$.

\subsection{Involutive Lines on projective surfaces}
Questions in experimental physics lead one of us  to the study of involutive lines (projectivization
of Lagrangian planes)  contained in surfaces in $\mathbb P^3$,  cf. \cite{Polynomial}.
Vaguely motivated by these problems we study the numbers of involutive lines
in a  surface $S \subset \mathbb P^3$ in Section \ref{S:involutive}.

If we set $\ell_i(d)$ as the number of involutive lines a degree $d$ smooth surface in $\mathbb P^3$ can have, then
our bound takes the following form.

\begin{THM}\label{T:bound}
If $S \subset \mathbb P^3$ is a smooth surface of degree $d\ge 3$ in $\mathbb P^3$
then the number of involutive lines in $S$ is at most $3d^2 -4d$, i.e. $\ell_i(d) \le 3d^2-4d$. Moreover,
\[
\overline{ \ell_i} = \limsup_{d\to \infty} \frac{\ell_i(d)}{d^2} \in [1,3] \,
\]
\end{THM}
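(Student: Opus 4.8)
The plan is to produce one explicit divisor on $S$ that contains every involutive line and has degree exactly $3d^2-4d$, and then to match the resulting bound asymptotically by an explicit family. Write the contact distribution as $D=\ker\alpha$ with $\alpha\in H^0(\mathbb{P}^3,\Omega^1_{\mathbb{P}^3}(2))$, and let $\omega=\alpha|_S\in H^0(S,\Omega^1_S(2))$ be its restriction, which defines the foliation $\F=D\cap TS$ with normal bundle $N_{\F}=\mathcal{O}_S(2)$. By adjunction $K_S=\mathcal{O}_S(d-4)$, so $T\F=K_S^{-1}\otimes N_{\F}^{-1}=\mathcal{O}_S(2-d)$. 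The first point I would record is that a line $\ell\subset S$ is involutive exactly when it is $\F$-invariant: since $T\ell\subset TS$ automatically, the contact condition $T\ell\subset D$ is equivalent to $T\ell=TS\cap D=\F$. Thus counting involutive lines amounts to counting $\F$-invariant lines.

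Next I would bring in the asymptotic web $\mathcal{A}$, cut out by the projective second fundamental form $\II\in H^0(S,\Sym^2\Omega^1_S\otimes\mathcal{O}_S(d))$ (the twist $\mathcal{O}_S(d)=N_{S/\mathbb{P}^3}$ matches the parabolic curve having the classical degree $4d^2-8d$). A line lying on $S$ meets $S$ with infinite contact, so it is an asymptotic line and its direction is a null direction of $\II$ at each of its points. Consequently, along an involutive line the $\F$-direction is asymptotic, and every involutive line is contained in the tangency divisor
\[
T=\{\,x\in S:\ \II(\F,\F)(x)=0\,\}.
\]
Evaluating $\II$ on $T\F$ yields a section
\[
\II(\F,\F)\in H^0\bigl(S,(T\F)^{\otimes(-2)}\otimes\mathcal{O}_S(d)\bigr)=H^0\bigl(S,\mathcal{O}_S(3d-4)\bigr),
\]
so $[T]=(3d-4)H$ and, with $H$ the hyperplane class and $H^2=d$, one gets $\deg_{\mathbb{P}^3}T=[T]\cdot H=d(3d-4)=3d^2-4d$. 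Since distinct involutive lines are distinct reduced components of $T$, each meeting $H$ in one point, the estimate $\ell_i(d)\le[T]\cdot H=3d^2-4d$ follows as soon as $T$ is an honest divisor.

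The step I expect to be the main obstacle is precisely this last proviso, namely that $T\neq S$, equivalently $\II(\F,\F)\not\equiv0$. Identical vanishing would mean that $\F$ is one of the two branches of the asymptotic web, forcing $\mathcal{A}$ to decompose into foliations with normal bundles $\mathcal{O}_S(2)$ and $\mathcal{O}_S(d-2)$; for surfaces whose asymptotic web is irreducible (such as the Fermat surfaces below) this cannot happen. To cover every smooth $S$ of degree $d\ge3$ I would instead argue through the genuine first extactic divisor $\mathrm{E}^1(\F)\subseteq T$ of $\F$ relative to lines: it vanishes identically only when all leaves of $\F$ are lines, i.e.\ when $S$ is ruled by lines, which is impossible for a smooth surface of degree $d\ge3$. (The same non-ruledness guarantees that $S$ carries only finitely many lines, so $\ell_i(d)<\infty$.) Reconciling this extactic viewpoint with the claimed degree $3d^2-4d$, and thereby handling the possibly decomposable case uniformly, is the delicate part of the argument.

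Finally, the asymptotics. The bound gives $\ell_i(d)/d^2\le 3-4/d$, hence $\overline{\ell_i}\le3$. For the lower bound I would exhibit, for each $d$, a smooth surface with at least $d^2$ involutive lines. Take the Fermat surface $x_0^d+x_1^d+x_2^d+x_3^d=0$ together with the symplectic form $\omega=dx_0\wedge dx_1+dx_2\wedge dx_3$ (which we may assume is the fixed one after a linear change of coordinates). For each pair of $d$-th roots $\zeta,\eta$ of $-1$ the line $\{x_0=\zeta x_1,\ x_2=\eta x_3\}$ lies on the surface; it is spanned by $(\zeta,1,0,0)$ and $(0,0,\eta,1)$, vectors lying in the two symplectically paired coordinate planes, so $\omega$ vanishes on the plane they span and the line is involutive. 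This produces $d^2$ distinct involutive lines, whence $\ell_i(d)\ge d^2$ and $\overline{\ell_i}\ge1$. Combining the two estimates gives $\overline{\ell_i}\in[1,3]$.
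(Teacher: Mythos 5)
Your proposal follows the paper's own route essentially step for step: every involutive line is invariant both by the contact foliation $\F$ (with $T\F=\mathcal O_S(2-d)$) and by the asymptotic web $\mathcal W_{\II}$, hence lies in their tangency locus $T$, cut out by $\II(v_\F,v_\F)\in H^0(S,(T\F)^{\otimes -2}\otimes\mathcal O_S(d))=H^0(S,\mathcal O_S(3d-4))$, giving $\ell_i(d)\le d(3d-4)$; and the lower bound comes from the same family of examples --- the paper uses $p(x_0,x_1)+q(x_2,x_3)=0$ for arbitrary binary forms $p,q$ of degree $d$ with the contact form $i_R(dx_0\wedge dx_1+dx_2\wedge dx_3)$, of which your Fermat surface is the special case $p=x_0^d+x_1^d$, $q=x_2^d+x_3^d$, verified by the same Lagrangian-plane computation. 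All degree computations agree with the paper's.

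The one genuine gap is the point you flagged but left open: excluding $\II(v_\F,v_\F)\equiv 0$. (The paper is silent on this as well; it simply treats $T$ as a divisor.) Your fallback does not repair it. If the extactic divisor ``relative to lines'' means the one attached to planes through a general point --- the only extactic divisor available in this framework --- then it lies in $|\mathcal O_S(3d-3)|$, it is \emph{not} contained in $T$ (it records where the osculating plane of a leaf passes through the chosen point, not where it equals $T_xS$), and it yields only the weaker bound $3d^2-3d$, exactly as the paper's remark after the theorem observes. If instead you mean the locus of inflections of leaves (order-three contact with the tangent line), that locus is indeed contained in $T$, but it has codimension two in $S$, so it is not a divisor and bounds nothing. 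The gap can, however, be closed by a short argument. Suppose $\II(v_\F,v_\F)\equiv 0$, i.e.\ every leaf of $\F$ is asymptotic. By Salmon's theorem (Section \ref{S:Salmon}), $S$ carries finitely many lines, so the leaf $\gamma$ through a general point is not a line; choose a lift $\hat\gamma:(\C,0)\to\C^4\setminus\{0\}$, so that $\langle\hat\gamma,\hat\gamma',\hat\gamma''\rangle$ is three-dimensional at a general parameter and $\gamma$ has a well-defined osculating plane. Since $\gamma$ is Legendrian, $\sigma(\hat\gamma,\hat\gamma')=0$ for the symplectic form $\sigma$, and differentiating gives $\sigma(\hat\gamma,\hat\gamma'')=0$; hence the osculating plane equals the contact plane. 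Since $\gamma$ is asymptotic, writing $S=\{F=0\}$ one has $dF(\hat\gamma)=dF(\hat\gamma')=0$ and $dF(\hat\gamma'')=-\mathrm{Hess}\,F(\hat\gamma',\hat\gamma')=0$, so the osculating plane also equals the tangent plane of $S$. Therefore $\mathcal C_x=T_xS$ on a dense subset of $S$, hence everywhere, i.e.\ the restriction of the contact form to $S$ vanishes identically --- contradicting Lemma \ref{L:isolated} (that restriction has only isolated zeros), or simply the fact that a contact distribution admits no two-dimensional integral submanifold. With this inserted your proof is complete; without it, what you have established unconditionally is only $\ell_i(d)\le 3d^2-3d$, which suffices for the assertion about $\overline{\ell_i}$ but not for the stated bound $3d^2-4d$.
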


The study of collections of pairwise disjoint lines (skew lines)  on projective surfaces is much more recent.
Set $s\ell(d)$ as the number of skew-lines a smooth surface of degree $d$ can have.
Miyaoka proved that  $ s \ell(d) \le 2d(d-2)$ when  $d\ge 4$ \cite{MR2569571}.
There are quartics containing $16$ skew lines (Kummer surfaces) thus $s\ell(4)=16$.
Rams \cite{MR2085146} exhibited examples of smooth surfaces which imply  $s\ell(d) \ge d(d-2)+2$ for $d\ge 6$; and Boissi\`{e}re-Sarti \cite{MR2341513} improved
Rams' lower bound to $s\ell(d) \ge d(d-2)+4$ when  $d \ge 7$ and $\gcd(d,d-2)=1$.

In the higher dimensional case, Debarre \cite{Debarre} and Starr \cite{Starr}  (independently)
proved that smooth hypersurfaces in $\mathbb P^{2m+1}$ contain a finite number of linearly embedded $\mathbb P^m$'s, $m$-planes for short.
Starr (loc. cit.) observes that there is a naive upper bound that grows like $d^{(m+1)^2}$ and suggests that there should exist a polynomial
bound with leading term $ ({(3m+1)!}/{2} - 1 ) d^{m+1}$.

The problem of bounding the number of pairwise disjoint $m$-planes does not seem to be studied so far.
Concerning pairwise disjoint involutive $m$-planes, we prove the following bound.

\begin{THM}\label{T:D}
If $X \subset \mathbb P^{2m+1}$ is a smooth hypersurface of degree $d\ge 3$ in $\mathbb P^{2m+1}$
then the maximal number of pairwise disjoint involutive $m$-planes in $X$ is at most $(d-1)^{m+1} + 1$. Moreover, when $X$ is a surface in $\mathbb P^3$, i.e. $m=1$, the bound is sharp
\end{THM}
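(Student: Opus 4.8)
Write $\mathbb{P}^{2m+1}=\mathbb{P}(V)$ with $V=\mathbb{C}^{2m+2}$ carrying the symplectic form $\omega$ that defines the contact structure, with contact distribution $D=\ker\alpha$, $\alpha\in H^0(\mathbb{P}^{2m+1},\Omega^1(2))$ corresponding to $\omega$; an $m$-plane is involutive exactly when it is the projectivisation $\mathbb{P}(L)$ of a Lagrangian subspace $L\subset V$. Two such planes $\mathbb{P}(L),\mathbb{P}(L')$ are disjoint iff $L\cap L'=0$, i.e. $V=L\oplus L'$. The plan is to fix one involutive $m$-plane $\Lambda_0=\mathbb{P}(L_0)$ belonging to a maximal disjoint collection and to bound the remaining planes by $(d-1)^{m+1}$; the fixed plane accounts for the summand $+1$. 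Choosing a complementary Lagrangian $L_0'$ and using $\omega$ to identify $L_0'\cong L_0^*$, every Lagrangian transverse to $L_0'$ is the graph $L_B=\{x+Bx:x\in L_0\}$ of a \emph{symmetric} homomorphism $B\colon L_0\to L_0^*$. A short computation gives $L_B\cap L_{B'}=\ker(B-B')$, so the planes distinct from $\Lambda_0$ correspond to symmetric $B_1,\dots,B_k$ with $\det B_i\ne 0$ and $\det(B_i-B_j)\ne 0$ for $i\ne j$; the last condition records pairwise disjointness as general position with respect to the discriminant.

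\textbf{The containment condition and the count.} In coordinates $(x,y)$ adapted to $V=L_0\oplus L_0'$ the equation of $X$ is a form $F$ of degree $d$ with $F(x,0)\equiv 0$, and $\Lambda_B\subset X$ iff $F(x,Bx)\equiv 0$ as a form in $x$. I would analyse this through the projection $\pi$ from $\Lambda_0$: for $[w]\in\mathbb{P}(L_0')$ the fibre $P_w=\langle\Lambda_0,[0:w]\rangle\cong\mathbb{P}^{m+1}$ contains $\Lambda_0$ as a hyperplane, so $X\cap P_w=\Lambda_0\cup R_w$ with $R_w$ a hypersurface of degree $d-1$ in $P_w$, cut out by $G(x,t;w)=0$ where $F(x,tw)=t\,G(x,t;w)$. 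Each $\Lambda_{B_i}$ is disjoint from $\Lambda_0$, hence meets $P_w$ in the single point $p_i(w)=[B_i^{-1}w:w]\in R_w$, and by disjointness these points are pairwise distinct for every $w$. Thus the $\Lambda_{B_i}$ are pairwise disjoint sections of the family of residual degree-$(d-1)$ hypersurfaces $\{R_w\}$, and I would bound their number by the relevant intersection number on the total space of this family, which I claim equals $(d-1)^{m+1}$. Concretely, I would seek to exhibit $m+1$ residual-type conditions of degree $d-1$ whose common solutions are exactly the $p_i$, so that B\'ezout yields the bound; the symmetry of the $B_i$ (equivalently the Lagrangian condition) together with $\det(B_i-B_j)\ne 0$ is what should reduce the a priori overdetermined system $F(x,Bx)\equiv 0$ to such a complete intersection and force the solutions to be isolated and counted once. \emph{This reduction---pinning down the complete-intersection structure of degree $(d-1)^{m+1}$ and proving that disjointness makes the intersection transverse---is the main obstacle.}

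\textbf{The case $m=1$ and sharpness.} When $m=1$ the argument can be made unconditional and sharp, which also motivates the general bound. The form $\alpha$ restricts to a twisted $1$-form $\alpha_X\in H^0(X,\Omega^1_X(2))$; it does not vanish identically, since a contact distribution admits no integral surface, and for $d\ge 3$ its zero locus is finite, so it defines a singular foliation $\mathcal{F}=TX\cap D$ with $N_{\mathcal F}=\mathcal{O}_X(2)$ and hence $T_{\mathcal F}=\mathcal{O}_X(2-d)$, using $c_1(TX)=(4-d)H$. The involutive lines in $X$ are precisely the $\mathcal{F}$-invariant lines. For an invariant line $C\cong\mathbb{P}^1$ one has $T_{\mathcal F}\cdot C=\chi(C)-\sum_{p\in C}\kappa_p$, where $\kappa_p$ is the vanishing order along $C$ of a local generator; since $T_{\mathcal F}\cdot C=2-d$ and $\chi(C)=2$, this gives $\sum_{p\in C}\kappa_p=d$. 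As $\kappa_p\le\mu_p$ (the Milnor number), each invariant line carries at least $d$ singularities counted with multiplicity. The total number of singularities equals $\int_X\bigl(c_2(TX)-c_1(T_{\mathcal F})c_1(N_{\mathcal F})\bigr)=d\bigl[(d-1)^2+1\bigr]$. Since pairwise disjoint invariant lines meet disjoint sets of singularities, the number $k$ of involutive lines satisfies $kd\le d[(d-1)^2+1]$, i.e. $k\le (d-1)^2+1$, the asserted bound for $m=1$. For sharpness I would produce a smooth surface for which $\mathcal{F}$ has only nondegenerate singularities, all lying on $(d-1)^2+1$ pairwise disjoint involutive lines, so that every inequality above becomes an equality; such a configuration is furnished by an explicit family of surfaces (see Section~\ref{S:involutive}).
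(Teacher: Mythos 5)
Your proposal does not prove the theorem: the bound for general $m$, which is the main content of the statement, is left open. Your reduction to symmetric maps $B_i$ and the projection from $\Lambda_0$ is a reasonable-looking plan, but the step you yourself flag as ``the main obstacle'' --- extracting from the overdetermined system $F(x,B_ix)\equiv 0$ a complete intersection of degree $(d-1)^{m+1}$ and proving that pairwise disjointness forces transverse, once-counted solutions --- is precisely where a proof would have to live, and it is not clear it can be carried out: bounding the number of pairwise disjoint linear sections of the family $\{R_w\}$ by an intersection number is not a standard fact, and nothing in your sketch produces the required $m+1$ equations. (There is also a small unaddressed point in the setup: planes disjoint from $\Lambda_0=\mathbb P(L_0)$ are graphs over $L_0'$, not over $L_0$; one must choose the auxiliary Lagrangian complement $L_0'$ generically so that all planes of the collection are transverse to it.) For comparison, the paper's argument avoids projections entirely: for an involutive $m$-plane $P\subset X$ the restriction of $i^*\omega$ to $P$ factors through a section $\sigma\in H^0(P,N^*P(2))$ via the conormal sequence
\[
0 \to N^*P(2) \to {\Omega^1_X}_{|P}(2) \to \Omega^1_P(2) \to 0\,,
\]
so each plane carries a subscheme of the zero scheme of $i^*\omega$ of length at least
\[
c_m(N^*P(2)) \;=\; \frac{(d-1)^{m+1}-1}{d-2}\,,
\]
while the whole zero scheme is finite of length $\frac{(d-1)^{2m+2}-1}{d-2}$; since the planes are pairwise disjoint, these subschemes are disjoint, and dividing the two lengths gives exactly $(d-1)^{m+1}+1$.

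Two further remarks. First, your $m=1$ argument is essentially correct and is in fact a foliation-index version of the same count (the index formula $T_{\mathcal F}\cdot C=\chi(C)-\sum\kappa_p$ gives the per-line contribution $d$, which agrees with $c_1(N^*P(2))=d$, and your $c_2$ computation $d[(d-1)^2+1]$ agrees with the paper's total length). But it rests on the assertion that the zero locus of $\alpha_X$ is finite for $d\ge 3$, which you state without proof; this is a genuine lemma (Lemma \ref{L:isolated} of the paper), proved by observing that on a positive-dimensional component $Z$ of the zero locus one would have $dF_{|Z}=s\cdot\omega_{|Z}$ with $s\in H^0(Z,\mathcal O_Z(d-2))$, forcing $X$ to be singular along the zeros of $s$. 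Second, sharpness for $m=1$ is not established: you only outline a hoped-for configuration, whereas one needs an explicit example; the paper uses Rams' surface $S_d=\{x_0^{d-1}x_1+x_1^{d-1}x_2+x_2^{d-1}x_3+x_3^{d-1}x_0=0\}$, $d\ge 6$, whose $d(d-2)+2=(d-1)^2+1$ skew lines are involutive for the contact form $i_R(dx_0\wedge dx_2+dx_1\wedge dx_3)$.
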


When $m>2$, we do not know if the bound above is sharp.

\section{Jet differentials}\label{S:jets}

\subsection{Jet spaces}
Let $(X,V)$ be a directed complex manifold, i.e. $X$ is a manifold and $V\subset TX$ is a subbundle of the tangent bundle of $X$.
The space of $k$-jets of germs of curves tangent to $V$, denoted by $J_k V$, is by definition the set of equivalence classes $j_k(f)$ of germs of
curves $f : (\C,0) \to X$ which are everywhere tangent to $V$ (i.e. $f'(t) \in V$ for every $t \in (\C,0)$) modulo the equivalence relation
$f \sim g$ if and only if all the derivatives of $f$ and $g$ at $0$ coincide up to order $k$. The space $J_k V$ is a vector bundle over $X$ of rank $k \rank V$.

Notice that $J_1 V$ is naturally isomorphic to (the total space of) $V$.

\subsection{Jet differentials}
Let us recall the definition of jet differentials of order $k$ and degree $m$ introduced in \cite{MR609557}.
These are sections of vector bundles $\EGG{V^*}$ over $X$ with fibers equal to the space of polynomials $Q(f',f'',\ldots, f^{(k)})$
over the fibers of $J^k V$ of weighted degree $m$ with respect to the $\mathbb C^*$-action
\[
\lambda \cdot  ( f', f'', \ldots, f^{(k)}) =  ( \lambda f', \lambda^2 f'', \ldots, \lambda^k f^{(k)}) .
\]

For $i$ ranging from $0$ to $[m/k]$, set $S_i$  as
the set of polynomials as above having degree with respect to $f^{(k)}$ at most $i$. Since  the degree of $Q$ with respect to  $f^{(k)}$ is at most $[m/k]$, we have  the following filtration
for  $\EGG{V^*}$:
\[
\EGG[k-1,m]{V^*} = S_0 \subset S_1 \subset \ldots \subset S_{[m/k]} = \EGG{V^*}\, .
\]
For  $1\le i\le [m/k]$,  the  quotient $S_i/S_{i-1}$ is isomorphic to $\EGG[k-1,m-ki]{V^*} \otimes \Sym^i V^*$.
Since
 $\EGG[1,m]{V^*}$ is nothing but $\Sym^m V^*$, we can proceed  inductively to obtain a filtration of $\EGG{V^*}$ satisfying
\[
\Grad^{\bullet} \EGG{V^*} = \bigoplus_{i_1 + 2 i_2 + \ldots + ki_k = m} \Sym^{i_1} V^* \otimes \cdots \otimes \Sym^{i_k} V^*.
\]

\subsection{Action of jet differentials on vector fields}\label{S:action0}

Let $\mathcal L$ be a line-bundle over $X$ and $\sigma \in H^0(X,\EGG{V^*} \otimes \mathcal L)$ be
a jet differential of order $k$ and degree $m$ with coefficients in $\mathcal L$. Given a holomorphic vector field
$v \in H^0(X,V) \subset H^0(X,TX)$ everywhere tangent to $V$ we can define the action of $\sigma$ on $v$ as follows. For any point $x \in X$ there exists
a unique germ $f_x : (\C,0) \to (X,x)$  such that $v(f_x(t)) = f_x'(t)$. We set $\sigma(v)$ as the section of $\mathcal L$
which at $x \in X$ is obtained by applying $\sigma$ to $j_k(f_x(t))$.

For any complex number $\lambda \in \mathbb C$ we have that $\sigma(\lambda v) = \lambda^m \sigma(v)$. But if $a \in H^0(X,\mathcal O_X)$ is
a non constant function ( of course we are assuming $X$ is not compact here)
 there is no obvious similar  relation between $\sigma(a v)$ and $a^m \sigma(v)$ when $k \ge 2$.

\subsection{Multiplication and differentiation of jet differentials}

Given two jet differentials $\sigma_i \in H^0(X, \EGG[k_i,m_i]{ V^*})$, we can multiply them to obtain an element of
$H^0(X, \EGG{V^*})$ where $k = \max\{k_1, k_2\}$ and $m = m_1 + m_2$ which sends a $k$-th jet $j_k(f)$ to $\sigma_1(j_{k_1}(f))\cdot \sigma_2(j_{k_2}(f))$. Thus we
have a $\mathcal O_X$-linear  (commutative) multiplication morphism
\[
\EGG[k_1,m_1]{V^*} \otimes_{\mathcal O_X} \EGG[k_2,m_2]{V^*} \longrightarrow \EGG{V^*} \, .
\]

There is also a linear differentiation morphism of $\mathbb C$-sheaves (cf. \cite{MR609557} )
\begin{align*}
D : \EGG{V^*} &\longrightarrow \EGG[k+1,m]{V^*} \\
\sigma &\longmapsto \left( j_{k+1}(f) \mapsto \frac{d}{dt} \sigma(j_k(f)) )\right) \, .
\end{align*}
In terms of the action of jet differentials on vector fields, we have that
\[
(D \sigma)(v) = v ( \sigma(v) ) \, .
\]
Notice that $D$ is not $\mathcal O_X$-linear but satisfies a Leibniz rule:
\[
 D ( \sigma_1 \cdot \sigma_2 ) = \sigma_1 \cdot D \sigma_2 + \sigma_2 \cdot D \sigma_1 \, .
\]

\subsection{Invariant jet differentials}

In \cite{MR1492539}, Demailly defined a subbundle $\ED{V^*}$ of the bundle of jet differentials $\EGG{V^*}$
whose sections  consist of jet differentials of order $k$ and degree $m$
which are invariant by reparametrizations  tangent to the identity. More explicitly, a jet differential $\sigma$ is an invariant jet differential if and only if at every point $x \in X$ it satisfies
\[
\sigma ( j_k(f ) ) = \sigma( j_k ( f \circ \varphi ) ) \,
\]
for any germ $f:(\mathbb C,0) \to (X,x)$ and any germ of diffeomorphism $\varphi : (\C,0) \to (\C,0)$ with $\varphi'(0)=1$.   The sections of $\ED{V^*}$ are called
invariant jet differentials of order $k$ and degree $m$.

\subsection{Action on foliations}
Since invariant jet differentials are jet differentials by definition, they act on vector fields as explained in \S\ref{S:action0}. Given a invariant jet differential with coefficients in a line bundle $\mathcal L$, say $\sigma \in H^0(X,\ED{V^*}\otimes \mathcal L)$, its invariance under reparametrizations
implies that for any $v \in H^0(X,V) \subset H^0(X,TX)$ and any holomorphic function $f \in H^0(X,\mathcal O_X)$ (constant or not) we have the identity
\[
\sigma ( f v ) = f^m \sigma (v) \in H^0(X,\mathcal L)\, .
\]
Therefore, $\sigma$ acts not only on vector fields but also on vector fields with coefficients on line bundles. If $v$ now belongs to $H^0(X,V\otimes \mathcal M)$
where $\mathcal M$ is an arbitrary line bundle then
\[
\sigma(v) \in H^0(X, \mathcal L \otimes \mathcal M^{\otimes m})\,.
\]

\subsection{Action on webs (when rank $V$ is equal to $2$)}
Suppose now that $V$ has rank two. If $\sigma \in H^0(X,\ED{V^*} \otimes \mathcal L)$ is an invariant jet differential then given a symmetric vector field
$ v \in H^0(X,\Sym^d V \otimes \mathcal M)$ with coefficients in a line bundle $\mathcal M$ we can define the action of $\sigma$ on $v$ as follows. Outside a closed analytic subset $\Delta$, the symmetric vector field $v$ can be locally written as the product of $d$ vector fields $v_1, \ldots, v_d$: $v= v_1 \cdots v_d$. The local decomposition is of course not unique since we may replace $v_i$ by $a_i v_i$ where $a_i$ are holomorphic functions satisfying $\prod a_i = 1$. Nevertheless, we can choose one such decomposition and set
\[
\sigma(v) = \prod \sigma(v_i) \, .
\]
Since $\sigma(a_iv_i) = a_i^m \sigma(v_i)$, it follows that a different  decomposition of $v$ leads to the same result.
But this expression does not make sense a priori at the analytic subset $\Delta$ where the decomposition of $v$ in a product of vector fields fails to exist. Nevertheless it is clear that the result can be extended meromorphically through $\Delta$. Therefore if  $\sigma$ is an element of $H^0(X,\ED{V^*} \otimes \mathcal L)$  and $v \in H^0(X,\Sym^d V \otimes \mathcal M)$ then $\sigma(v)$ is a meromorphic section of $\mathcal L^{\otimes d} \otimes \mathcal M^{\otimes m}$.

\section{Extactic divisors}\label{S:extactic}

\subsection{Extactic divisors for foliations}
Let $X$ be a projective manifold and let  $|W| \subset \mathbb P H^0(X,\mathcal N)$ be a linear system of dimension $k \ge 1$. For any germ $f : (\mathbb C,0) \to X$ define
\[
   \sigma_W (f) = \det\left(
   \begin{array}{cccc}
      f_0(t) & f_1(t) & \cdots & f_k(t) \\
       f_0' (t)  &  f_1'(t)   & \cdots &  f_k'(t) \\
      \vdots & & & \\
      f_0^{(k)}(t)  & f_1^{(k)}(t)   & \cdots &  f_k^{(k)}(t) \\
   \end{array}\right)
\]
where $f_i(t) = s_i(f(t))$ for functions $s_0, \ldots, s_k$ expressing a basis of $W$ in a trivialization of $\mathcal N$ at a neighborhood of $f(0)$. These local expressions patch together to an invariant differential of order $k$ and degree $ m = k(k+1)/2$ with coefficients in $\mathcal N^{\otimes k+1}$, i.e. $\sigma_W$ can be interpreted as an element of $H^0(X,\ED{T^*X} \otimes \mathcal N^{\otimes k+1} )$.

Alternatively we can interpret $\sigma_W$ as follows. On any projective space $\mathbb P^k$ we have a natural invariant jet differential
\[
  \sigma \in H^0(\mathbb P^k, \ED[k,k(k+1)/2]{\Omega^1_{\mathbb P^k}} \otimes K_{\mathbb P^k}^*)
\]
defined as follows. Let $\gamma : ( \mathbb C,0) \to \mathbb P^k$  be a germ and consider an arbitrary lifting $\hat{\gamma} : (\mathbb C,0) \to \mathbb C^{k+1}-\{0\} $ under the natural
projection $\pi: \mathbb C^{k+1} - \{0\} \rightarrow \mathbb{P}^k$.
The jet differential  $\sigma$ maps $\gamma$ to  $\pi_*\hat{\gamma}'(t) \wedge \pi_*\hat{\gamma}''(t) \wedge \ldots \wedge \pi_*\hat{\gamma}^{(k)}(t) \in \gamma^* \bigwedge^k T \mathbb P^k \simeq \gamma^* K_{\mathbb P^k}^*$.
If we now consider the rational map $\varphi : X \dashrightarrow \mathbb P W^*\simeq \mathbb P^k$ associated to $|W|$ then $\sigma_W$ is nothing but the pull-back of $\sigma$ under $\varphi$.

\begin{prop}
 	If $f : (\C,0) \to X$ is a non-constant germ such that $\sigma_W(f(t))$ vanishes identically then  the image of any representative of $f$ is contained in an element of the linear system $W$.
\end{prop}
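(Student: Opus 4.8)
The plan is to recognize $\sigma_W(f)$ as the classical Wronskian of the holomorphic functions $f_0,\ldots,f_k$ and to reduce the statement to the characterization of linear dependence by the vanishing of the Wronskian. First I would fix the trivialization of $\mathcal N$ near $f(0)$ used to define $f_i = s_i\circ f$, and observe that, up to sign, $\sigma_W(f)$ is exactly the Wronskian determinant $\mathrm{Wr}(f_0,\ldots,f_k) = \det\bigl(f_i^{(j)}\bigr)_{0\le i,j\le k}$. Thus the hypothesis $\sigma_W(f)\equiv 0$ says precisely that $\mathrm{Wr}(f_0,\ldots,f_k)$ vanishes identically on some connected neighborhood $U\subset\C$ of $0$.

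Next I would invoke the following classical fact: if $g_0,\ldots,g_k$ are holomorphic on a connected open subset of $\C$, then $\mathrm{Wr}(g_0,\ldots,g_k)\equiv 0$ if and only if the $g_i$ are linearly dependent over $\C$. The easy direction follows from multilinearity of the determinant. For the converse — the delicate point — I would argue by induction on $k$: if $f_0,\ldots,f_{k-1}$ are already dependent we are done; otherwise, by the inductive hypothesis their Wronskian is not identically zero, so its zeros are isolated and the set $U'\subset U$ where it is nonvanishing is open, dense and connected. On $U'$ the functions $f_0,\ldots,f_{k-1}$ form a fundamental system of solutions of the monic order-$k$ linear ODE with holomorphic coefficients obtained by normalizing $\mathrm{Wr}(f_0,\ldots,f_{k-1},y)=0$, and the vanishing of the full Wronskian $\sigma_W = \mathrm{Wr}(f_0,\ldots,f_{k-1},f_k)$ forces $f_k$ to solve that same equation; hence $f_k$ is a $\C$-linear combination of $f_0,\ldots,f_{k-1}$ on $U'$, and this relation propagates to all of $U$ by the identity theorem. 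The essential input here is holomorphicity — the criterion is false for merely smooth functions — so this is exactly where the analytic nature of $f$ enters, and I expect the careful treatment of the locus where the lower-order Wronskian vanishes to be the only real obstacle.

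Finally I would translate the resulting relation $\sum_i c_i f_i\equiv 0$, with $(c_0,\ldots,c_k)\neq 0$, back into geometry. Setting $s=\sum_i c_i s_i$, this is a nonzero section representing an element of $W$, and in the chosen trivialization $\sum_i c_i f_i(t)$ is nothing but $s(f(t))$. Passing to a different trivialization multiplies all the $f_i$ by a common nowhere-vanishing holomorphic factor, which changes neither the linear dependence relation nor the zero locus of $s$; hence the identity $s(f(t))=0$ is intrinsic. By the identity theorem it holds on the domain of any representative of $f$, so the image of any such representative is contained in the zero divisor of $s$, which is an element of the linear system $|W|$. The remainder of the argument beyond the Wronskian criterion is bookkeeping.
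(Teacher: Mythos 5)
Your proposal is correct and follows exactly the paper's argument: the paper's proof is the one-line observation that $\sigma_W(f)\equiv 0$ means the local functions $f_0,\ldots,f_k$ have identically vanishing Wronskian and are therefore linearly dependent, which is precisely your reduction. The only difference is that you supply the proof of the Wronskian criterion for holomorphic functions and the trivialization bookkeeping, both of which the paper leaves implicit.
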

\begin{proof}
    If $\sigma_W(f(t)) =0$ then the local functions $f_0(t), \ldots , f_k(t)$ have zero Wronskian and therefore are linearly dependent.
\end{proof}

Given a foliation by curves $\mathcal F$ defined by a vector field $v  \in  H^0(X,TX \otimes T^*\mathcal F)$ the zero divisor of $\sigma_W(v) \in H^0(X, \mathcal N^{\otimes k+1} \otimes (T^*\mathcal F)^{\otimes m} )$ (when different from $X$)  is called in \cite{MR1860669}  the extactic divisor of $\mathcal F$ with respect to the linear system $W$.

\begin{prop}
With the previous notation, if $\sigma_W(v) $ vanishes identically then every leaf of $\mathcal F$ is contained in an element of the linear system $|W|$ and there exists a non-constant rational function $h \in \mathbb C(X)$ constant along the leaves of $\mathcal F$.
\end{prop}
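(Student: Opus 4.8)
The plan is to work locally and read $\sigma_W(v)$ as a Wronskian determinant. Near a generic point, after trivializing $\mc N$, $T^*\F$ and $\F$, the twisted vector field $v$ becomes an ordinary local vector field, and the integral curve $f_x$ with $f_x(0)=x$, $f_x'=v(f_x)$ has jet entries $f_i^{(j)}(0)=(v^j s_i)(x)$, where $s_0,\dots,s_k$ express a basis of $W$. In this trivialization $\sigma_W(v)(x)=\det M(x)$ with $M(x)=\big(v^j s_i(x)\big)_{0\le j,i\le k}$, the Wronskian matrix of $s_0,\dots,s_k$ along $v$. The hypothesis is then $\det M\equiv 0$.

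First I would settle the statement about leaves. Fix a regular point $x$ and a point $y=f_x(t_0)$ on the same leaf; by the flow property the integral curve through $y$ is $s\mapsto f_x(t_0+s)$, so $\sigma_W(v)(y)$ equals the value at $t_0$ of the scalar function $t\mapsto\det M(t)$. Since $\sigma_W(v)\equiv 0$, this function vanishes identically along the leaf, so $\sigma_W(f_x(t))$ vanishes identically and the preceding Proposition forces the image of $f_x$, i.e. the leaf through $x$, to lie in an element of $|W|$. Running this over the dense set of regular points and taking closures shows that every leaf is contained in an element of the system.

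For the rational first integral I would extract it from the maximal minors of $M$. Let $c_i=(-1)^{k+i}\det M^{(k,i)}$ be the cofactors along the last row, so $\det M=\sum_i f_i^{(k)}c_i$; these are polynomial in the $v^j s_i$ and, once the twists are accounted for, patch into global rational sections whose ratios $c_i/c_j$ are honest rational functions on $X$. Cofactor expansion (a determinant with a repeated row) gives $\sum_i f_i^{(j)}c_i=0$ for $j=0,\dots,k-1$, and the hypothesis supplies the missing relation $\sum_i f_i^{(k)}c_i=0$; thus $(c_i)$ spans the kernel of $M^{\top}$ along the leaf. Differentiating $\sum_i f_i^{(j)}c_i=0$ in $t$ and using $\sum_i f_i^{(j+1)}c_i=0$ leaves $\sum_i f_i^{(j)}c_i'=0$ for $j=0,\dots,k-1$. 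Since generically the first $k$ rows of $M$ already have rank $k$, their common kernel is the line spanned by $(c_i)$, so $(c_i')$ is proportional to $(c_i)$; equivalently $[c_0:\cdots:c_k]$ is constant along the leaf. Any non-proportional ratio $h=c_i/c_j$ is then a first integral, and it is non-constant because otherwise all leaves would sit in a single $D\in|W|$, forcing $D\supseteq X$.

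The delicate point is precisely this genericity of the rank: the construction as stated requires the general leaf to be contained in a \emph{unique} member of $|W|$, so that the first $k$ rows of $M$ have rank $k$ on a dense set. When the general leaf lies in a positive-dimensional linear subsystem the kernel of $M^{\top}$ jumps in dimension and a single cofactor vector no longer captures it. The remedy is to record that kernel as a point of a Grassmannian through its Pl\"ucker coordinates, i.e. the maximal non-vanishing minors of $M$; the same differentiation argument shows these coordinates are constant along leaves, and ratios of two independent ones again give a non-constant first integral. Checking that the locally defined minors glue to global rational sections, so that the twists by $\mc N$ and $T^*\F$ cancel in the ratios, is routine but should be verified; alternatively one could invoke the general principle that a foliation whose general leaf is algebraic admits a rational first integral, but the determinantal construction has the virtue of being explicit and self-contained.
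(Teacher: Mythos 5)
Your argument is sound, but it cannot coincide with ``the paper's proof'' because the paper offers none: it simply cites \cite[Theorem 3]{MR1860669}. So your self-contained Wronskian argument is necessarily a different route, though it lives in the same circle of ideas as the cited reference. The first half (reduce to the previous proposition by noting that $\sigma_W(v)$ restricted to a leaf is $\sigma_W$ applied to the parametrized integral curve) is exactly right. For the first integral, your cofactor computation is correct: the identities $\sum_i f_i^{(j)} c_i=0$ for $j\le k-1$ come from repeated rows, the hypothesis supplies $j=k$, and differentiation puts $(c_i')$ in the common kernel, so the ratios $c_i/c_j$ are first integrals once the first $k$ rows generically have rank $k$. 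You correctly isolate the one genuine issue --- this rank condition fails exactly when the general leaf lies in a positive-dimensional subsystem, in which case all $c_i\equiv 0$ --- and your Pl\"ucker remedy does close it: for generic $x$ the kernel of $M(x)$ is precisely the subsystem of elements of $W$ vanishing on the leaf through $x$, it depends only on the leaf, and if it were a fixed subspace its base locus would be a proper algebraic subset containing a dense union of leaves. A slicker way to close the same gap, which stays entirely inside the determinantal framework, is induction on $\dim W$: the cofactors $c_i$ are themselves the extactic determinants of the hyperplane subsystems $W_i=\langle s_0,\ldots,\widehat{s}_i,\ldots,s_k\rangle$, so either some $c_i\not\equiv 0$ and your main argument runs, or some $W_i$ has identically vanishing extactic and one concludes by induction, the base case of a pencil giving the first integral directly as $s_1/s_0$ (every leaf then lies in an element of $|W_i|\subset|W|$). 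This induction is essentially how the cited reference handles the degeneracy; your Grassmannian version buys uniformity (no induction, the first integral is exhibited as a ratio of explicit maximal minors in all cases), at the cost of the gluing/twist verifications you flag, which are indeed routine since changes of trivialization of $\mathcal N$ and $T^*\mathcal F$ act on the rows by lower-triangular matrices with constant scalars on the diagonal, hence rescale all maximal minors of a fixed row set by a common factor.
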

\begin{proof}
	See \cite[Theorem 3]{MR1860669}.
\end{proof}

\subsection{Extactic divisors for webs on surfaces}

\begin{lemma}\label{L:degdisc}
	Assume $X$ is a surface and let  $\mathcal M$ be a line bundle on $X$.
    If $v$ is a holomorphic  section of $\Sym^d TX\otimes \mathcal M$  then the discriminant of $v$ is a section $\Delta(v)$  of $K_X^{-d(d-1)} \otimes \mathcal M^{\otimes 2(d-1)}$, i.e.
	\[
	\Delta(v) \in H^0(X,\det(TX)^{d(d-1)}\otimes \mathcal M^{\otimes 2(d-1)})
	\]	
\end{lemma}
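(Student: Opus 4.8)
The plan is to reduce the statement to two classical facts about the discriminant of a binary form of degree $d$ — that it is homogeneous of degree $2(d-1)$ in the coefficients, and that it is a relative invariant of weight $d(d-1)$ under the natural $GL_2$-action — and then to read off the line bundle directly from the transition functions of $TX$ and $\mathcal M$.

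First I would work in a chart $U_\alpha$ adapted to both bundles: choose coordinates inducing a frame $e_1=\de_x,\ e_2=\de_y$ of $TX$ together with a trivialization of $\mathcal M$. Then $\Sym^d TX$ is framed by $\{e_1^{d-i}e_2^{i}\}_{0\le i\le d}$, and $v$ is represented by a binary form
$$
F_\alpha(s,t)=\sum_{i=0}^d a^{(\alpha)}_i\, s^{d-i}t^{i},
$$
with the $a^{(\alpha)}_i$ holomorphic on $U_\alpha$ and $(s,t)$ formal variables. I set $\Delta(v)|_{U_\alpha}:=\operatorname{disc}(F_\alpha)$, the usual discriminant of a binary $d$-form. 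The entire content is then to show that these local functions glue to a section of the asserted bundle, which amounts to controlling their behaviour under the two kinds of transition: the scalar rescaling coming from the cocycle of $\mathcal M$, and the $GL_2$-substitution coming from the cocycle of $TX$.

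The two inputs I need are: (i) $\operatorname{disc}$ is homogeneous of degree $2(d-1)$ in $(a_0,\dots,a_d)$; and (ii) for $g\in GL_2$ acting on $\Sym^d$ by substitution, $\operatorname{disc}(g\cdot F)=(\det g)^{\,d(d-1)}\operatorname{disc}(F)$. Fact (i) is classical, e.g.\ from $\operatorname{disc}(F)=a_0^{\,2d-2}\prod_{i<j}(\alpha_i-\alpha_j)^2$ in terms of the roots, or from the resultant of $F$ and $F'$. For (ii) I would argue representation-theoretically: the discriminant locus $\{\operatorname{disc}=0\}\subset \Sym^d\C^2$ consists of the forms with a repeated linear factor, an irreducible $GL_2$-invariant hypersurface, and $\operatorname{disc}$ is (up to scalar) its reduced defining polynomial. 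Hence $\operatorname{disc}(g\cdot F)=c(g)\operatorname{disc}(F)$ for some $c(g)\in\C^*$, and $g\mapsto c(g)$ is a character of $GL_2$, so $c(g)=(\det g)^{p}$ for an integer $p$. Evaluating on the scalar matrix $g=\lambda\,\mathrm{id}$, which scales every coefficient by $\lambda^{d}$ and hence, by (i), scales $\operatorname{disc}$ by $\lambda^{2d(d-1)}$, while $(\det g)^{p}=\lambda^{2p}$, forces $p=d(d-1)$.

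With (i) and (ii) in hand the gluing is immediate. On $U_\alpha\cap U_\beta$ the section $v$ satisfies $F_\alpha=m_{\alpha\beta}\,(g_{\alpha\beta}\cdot F_\beta)$, where $m_{\alpha\beta}$ is the cocycle of $\mathcal M$ and $g_{\alpha\beta}\in GL_2$ the cocycle of $TX$. Homogeneity and (ii) then give
$$
\operatorname{disc}(F_\alpha)=m_{\alpha\beta}^{\,2(d-1)}\,(\det g_{\alpha\beta})^{\,d(d-1)}\,\operatorname{disc}(F_\beta).
$$
Since $\det g_{\alpha\beta}$ is precisely the cocycle of $\det TX=K_X^{-1}$, the local functions $\operatorname{disc}(F_\alpha)$ transform by the cocycle of $\det(TX)^{d(d-1)}\otimes\mathcal M^{\otimes 2(d-1)}=K_X^{-d(d-1)}\otimes\mathcal M^{\otimes 2(d-1)}$, and therefore patch to a global holomorphic section $\Delta(v)$ of that bundle. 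The only genuinely delicate step is pinning down the exponent in (ii); once the invariance of the discriminant hypersurface reduces the transformation law to a character and the scalar matrix fixes the power, everything else is routine bookkeeping with cocycles.
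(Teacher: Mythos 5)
Your proof is correct, and it is necessarily a different route from the paper's, because the paper does not actually prove Lemma \ref{L:degdisc}: its ``proof'' consists of a citation to Section 1.3.4 of Pereira--Pirio, \emph{An invitation to web geometry}, where the discriminant of a web is set up by essentially the kind of local-to-global computation you carry out. Your argument is the standard self-contained one: represent $v$ in a chart by a binary $d$-form with holomorphic coefficients, use that the discriminant is homogeneous of degree $2(d-1)$ in the coefficients and a relative $GL_2$-invariant of weight $d(d-1)$, and conclude that the local discriminants glue along the cocycle of $\det(TX)^{d(d-1)}\otimes\mathcal M^{\otimes 2(d-1)}$. Your character-theoretic derivation of the weight is a clean way to avoid resultant manipulations; note that it leans on the classical fact that the discriminant is the \emph{reduced} (indeed irreducible) equation of the hypersurface of forms with a repeated factor, which you correctly invoke rather than prove. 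The one place where genuine care is needed is the direction of the substitution action: with frames of $TX$ changing by the Jacobian cocycle $J_{\alpha\beta}$, whose determinant is the cocycle of $\det TX=K_X^{-1}$, the local forms satisfy $F_\alpha=m_{\alpha\beta}\,\bigl(F_\beta\circ J_{\alpha\beta}^{T}\bigr)$, producing the factor $(\det J_{\alpha\beta})^{d(d-1)}$ with positive exponent; had one instead used the action $g\cdot F=F\circ g^{-1}$, the same bookkeeping would land in $K_X^{+d(d-1)}$, which is a different bundle since $d(d-1)$ exponents do not cancel against anything. Your conventions are coherent and give the correct answer, as can be cross-checked against the paper itself: for a $d$-web of degree $r$ on $\mathbb P^2$, i.e.\ $\mathcal M=\mathcal O_{\mathbb P^2}(r-d)$, your formula yields a discriminant of degree $3d(d-1)+2(d-1)(r-d)=(d-1)(d+2r)$, exactly the value the paper extracts from Lemma \ref{L:degdisc} in the proof of Theorem \ref{P:degree of extactic}.
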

\begin{proof}
	See \cite[Section 1.3.4]{MR2536234}.
\end{proof}

\begin{lemma}\label{L:actionwebsW}
	Let $X$ be a projective manifold and $|W| \subset \mathbb P H^0(X,\mathcal N)$ be a linear system of dimension $k \ge 1$.
	If $\sigma_W$ is the associated jet differential  and $v \in H^0(X,\Sym^d V \otimes \mathcal M)$  is a symmetric vector field with $\Delta(v)\neq 0$ then  the  polar divisor of $\prod \sigma_W(v_i)$  satisfies
	\[
	\left(\prod \sigma_W(v_i)\right)_{\infty} \le  \frac{k (k-1)}{2} (\Delta(v))_0 \, .
	\]
\end{lemma}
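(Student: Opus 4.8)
The plan is to localize the problem at the discriminant and to pin down the exact pole order there by combining the reparametrization invariance of $\sigma_W$ with a Wronskian vanishing estimate. Wherever the $d$ directions of $v$ are distinct, each $v_i$ is holomorphic and simple, so each $\sigma_W(v_i)$ is holomorphic; hence the polar divisor of $\prod \sigma_W(v_i)$ is supported on $(\Delta(v))_0$ and it is enough to bound its order along each component. First I would reduce to a generic point $p$ of a component of the discriminant, where exactly two directions collide. Choosing local coordinates $(t,y)$ with the discriminant equal to $\{t=0\}$, the factorization of $v$ splits as a product of a holomorphic part and the two colliding factors $v_1,v_2$; the holomorphic factors contribute holomorphic terms, so I only need to control $\sigma_W(v_1)\sigma_W(v_2)$.

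The two colliding directions are conjugate Puiseux branches $\lambda_{1,2}=\mu\pm\tfrac12\sqrt{t}\,(\mathrm{unit})$, so I would pass to the normalization $\pi\colon(u,y)\mapsto(t,y)=(u^2,y)$, on which $v_1,v_2$ become single valued and are exchanged by the deck involution $\iota\colon u\mapsto -u$. Writing $\partial_t=(2u)^{-1}\partial_u$, the pullback $\pi^*v_a$ equals $(2u)^{-1}$ times a holomorphic vector field $\mathcal D_a$ with $\mathcal D_a|_{u=0}=\tfrac12\partial_u$. The identity $\sigma_W(fw)=f^{m}\sigma_W(w)$ for the invariant jet differential $\sigma_W$ (with $m=k(k+1)/2$) then gives $\sigma_W(\pi^*v_a)=(2u)^{-m}\sigma_W(\mathcal D_a)$, where $\sigma_W(\mathcal D_a)$ is holomorphic. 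Since $\sigma_W(v_1)\sigma_W(v_2)$ descends from the $\iota$-invariant function $(2u)^{-2m}\sigma_W(\mathcal D_1)\sigma_W(\mathcal D_2)$, its pole order along $\{t=0\}$ equals $m-\operatorname{ord}_{u=0}\sigma_W(\mathcal D_1)$ (the two factors have equal order by the $\iota$-symmetry). Everything therefore comes down to the inequality $\operatorname{ord}_{u=0}\sigma_W(\mathcal D_1)\ge k$.

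To prove this I would read $\sigma_W(\mathcal D_1)$, using $u$ as parameter along the flow of $\mathcal D_1$, as the Wronskian $W_u(S_0,\dots,S_k)$ of the functions $S_i(u)=s_i(\beta(u))$, where $\beta(u)=(u^2,Y(u))$ with $Y(u)=\eta+O(u^2)$ is the image in $X$ of the flow line. The crucial point is that $\beta$ is a cusp: $\beta'(0)=0$, hence $\beta(u)-\beta(0)=O(u^2)$ and every $S_i$ — and thus every linear combination of the $S_i$ — has no $u^1$-term. Consequently the order sequence $a_0<a_1<\dots<a_k$ of the span of the $S_i$ avoids $1$, so $a_0\ge 0$ and $a_l\ge l+1$ for $l\ge 1$; feeding this into the Wronskian order formula $\operatorname{ord}_{u=0}W_u=\sum_{l=0}^{k}(a_l-l)$ yields $\operatorname{ord}_{u=0}\sigma_W(\mathcal D_1)\ge k$, and hence the bound $\tfrac{k(k-1)}{2}$ at a generic point of the discriminant.

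The main obstacle is exactly this last estimate: a naive bound on the determinant obtained by adding the pole orders of its rows gives $(k-1)^2$, which is far too large, and the sharp value $\tfrac{k(k-1)}{2}=m-k$ is only visible after the determinantal cancellation encoded in the gap of the order sequence. For the full divisor inequality I would run the same argument at an arbitrary generic point of a component $\Gamma$, replacing the double cover by the normalization of the colliding cluster: an $e$-fold cyclic collision is resolved by $t=u^{e}$, the flow image $\beta$ becomes a higher cusp $\beta(u)-\beta(0)=O(u^{e})$, and the order sequence then avoids $1,\dots,e-1$. The resulting pole order works out to $(e-1)\tfrac{k(k-1)}{2}$, which matches $\tfrac{k(k-1)}{2}\operatorname{ord}_\Gamma\Delta$ because such a cluster contributes exactly $e-1$ to $\operatorname{ord}_\Gamma\Delta$; a general collision is a product of such clusters and the contributions add.
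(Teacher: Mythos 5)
Your proposal takes a genuinely different route from the paper's, and where it applies it is cleaner. The paper expands the colliding branches in fractional powers directly on the base and estimates the determinant row by row, discarding at each stage the part of a row that is a multiple of earlier rows --- a somewhat delicate induction. You instead pass to the cyclic cover $t=u^{e}$, use the reparametrization invariance $\sigma_W(fw)=f^{m}\sigma_W(w)$ to peel off the pole prefactor exactly, and control the remaining \emph{holomorphic} Wronskian on the cover through the gap forced in its order sequence by the cusp of the projected integral curves. The numerology agrees with the paper's (a pole of at most $\tfrac{k(k-1)}{2}(1-1/e)$ per branch), and your order-sequence estimate is a rigorous substitute for the paper's row-reduction sketch.

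There is, however, a genuine case gap. Your normal form $\lambda_{1,2}=\mu\pm\tfrac12\sqrt{t}\,(\mathrm{unit})$, and in general the claim that the flow image is a cusp with $\beta(u)-\beta(0)=O(u^{e})$, presupposes that the limiting direction of the colliding cluster is \emph{transverse} to the discriminant $\{t=0\}$, i.e.\ that the slopes $dy/dt$ stay finite. This fails along entire components for perfectly ordinary webs: for the local $2$-web $v=\partial_y^{2}-t\,\partial_t^{2}$ the branches are $\partial_y\pm\sqrt{t}\,\partial_t$, the discriminant is $\{t=0\}$ with $\ord_t\Delta(v)=1$, and the leaves are \emph{smooth} parabolas tangent to the discriminant, not cusps; globally this is exactly the situation for the $2$-web of tangent lines of a smooth conic in $\mathbb{P}^{2}$, whose discriminant is the conic itself (the paper's remark after the lemma singles out this invariant-discriminant case as well). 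In this situation the functions $S_i$ do have first-order terms, the order sequence does not avoid $1,\dots,e-1$, and your key estimate $\ord_{u=0}\sigma_W(\mathcal{D}_a)\ge k(e-1)$ is false as stated. The lemma still holds there, and your own tools repair it: writing the branches as $\mu_a\partial_t+\partial_y$ with $j=\ord_u\mu_a\ge 1$, the prefactor extracted by invariance drops from $(e-1)m$ to $(e-1-j)m$ per branch while the Wronskian gap drops only from $k(e-1)$ to $k(e-1-j)$, so the pole is at most $(e-1-j)\tfrac{k(k-1)}{2}$, comfortably below $\tfrac{k(k-1)}{2}\ord_\Gamma\Delta$ since a tangent cluster contributes at least $j(e-1)$ to the discriminant. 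But this parallel computation is absent from your write-up, and without it the proof covers only the transverse case; the paper's frame-free decomposition $w_i=a_ie_1+b_ie_2$ sidesteps the dichotomy altogether. A smaller instance of the same over-genericity: a size-$e$ cluster contributes \emph{at least}, not ``exactly,'' $e-1$ to $\ord_\Gamma\Delta$ --- fortunately that is the direction of the inequality you need.
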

\begin{proof}
	Let $H$ be an irreducible component of the discriminant of $v$. At a neighborhood $U \simeq \mathbb D^n$ of a sufficiently general point  $p \in H$ we can choose local holomorphic coordinates $x_1, \ldots, x_{n-1},y$ such that $\Delta(v) = y^\delta\times u$ where $\delta\ge 1$ is an integer and $u$ is an unity of $\mathcal O_{X,p}$.
	Hence $H = \{ y=0 \}$. Since $V$ has rank two we can decompose $v$ at a neighborhood of $p$ as $w \times v_1 \cdots v_{d- r}$ where $w$ is a local section of $\Sym^r V$ and $v_1, \ldots, v_{d-r}$ are local sections of $V$. We can further assume that $w$ is indecomposable, i.e. cannot be expressed as a product of $w_1 w_2$ with $w_1$ and $w_2$ sections  of strictly positive symmetric powers of $V$. In this case the order of the discriminant of $w$ along $H$ is at least $r-1$, i.e., $r-1 \le \delta$. Since $r\ge2$, we also have that $r/2 \le \delta$.
	
	Notice that the fundamental group of  $U - H \simeq \mathbb D^{n-1} \times \mathbb D^*$  is $\mathbb Z$.  We can thus choose generators $e_1, e_2$ of $V$ and write
	\[
	w = \prod_{i=1}^r \left( a_i(x,y^{1/r}) e_1  + b_i(x, y^{1/r}  ) e_2 \right) \, ,
	\]
	for suitable holomorphic functions $a_i, b_i$. Set $w_i = a_i(x,y^{1/r}) e_1 + b_i(x,y^{1/r}) e_2$.  We claim that
	\[
	 \ord_H ( \sigma_W(w_i) ) \ge ( 1 + 2 + \ldots + (k-1) )  \left( \frac{1}{r} - 1  \right) = \frac{k(k-1)}{2} \left(\frac{1}{r} -1 \right).
	\]
	Indeed for the two first rows of the matrix used to compute $\sigma(w_i)$ the order
	along $H$ is non-negative. The chain rules shows that on the third row, a monomial $y^{ 1/r -1 }$ will appear. By the product rule, the fourth row will be a linear  combination of  the third row multiplied by $y^{1/r - 2}$ and another expression involving the monomial $y^{ 2(1/r -1)}$. The multiple of third row will be disregard when taking determinants. The claim follows by induction.

	Therefore the order of $\prod \left(\sigma_W(w_i)\right)$ along $H$ is at least
	\begin{equation}\label{E:boundpoles}
	   r  \frac{k(k-1)}{2} \left(\frac{1}{r} -1 \right) = \frac{k(k-1)}{2} \left(1-r \right) \ge - \frac{k(k-1)}{2} \delta.
	\end{equation}
	The lemma follows.
\end{proof}

\begin{remark}
Let $\mathcal W_2$ be a $2$-web on $(\mathbb C^2,0)$ having reduced discriminant equal to  $C=\{ y=0\}$.
If $C$ is not invariant by $\mathcal W_2$ then in suitable coordinates $\mathcal W_2$ is defined by
$v_2 = \frac{\partial}{\partial y}^2 - y \frac{\partial}{\partial x}^2$, see \cite[Lemma 2.1]{MR3038728}.
If we consider the linear system $|W|$   of dimension $k$ locally generated by $1, y, x, y^2, xy, x^2, \ldots, x^{s-2}y^2, x^{s-1}y, x^s$ when $k+1 = 3s$, then the polar divisor of $\sigma_W (v_2)$ has order exactly $k(k-1)/2$ over $C$. In fact, we have the local decomposition $v_2=u\cdot v$, where $u= \frac{\partial}{\partial y} - \sqrt{y} \frac{\partial}{\partial x}$ and $v= \frac{\partial}{\partial y} + \sqrt{y} \frac{\partial}{\partial x}$, so  over a general solution of $v$, say $f(t)=(2t^3/3, t^2)$, we see that $f^*(v)=\frac{1}{t}\frac{\partial}{\partial t}$ and that $f^*W$ is generated by $1, t^2,t^3, \ldots, t^{3s-1},t^{3s}$.
A simple computation shows that  $\sigma_W(f)$ has a pole at $t=0$ of order exactly $k(k-1)/2$.
Since $f$ ramifies over $C$ we see that $v$ contributes to the order of $\sigma_W(v_2)$ along $C$ with $-k(k-1)/4$. Since the other factor of $v_2$  also contributes with $-k(k-1)/4$ we deduce
that $\sigma_W(v_2)$ has a pole of order exactly $k(k-1)/2$ along $C$.
For the cases $k+1=3s-1$ or $3s-2$ we consider the linear systems $1, y, x, y^2, xy, x^2, \ldots, x^{s-2}y^2, x^{s-1}y$ and $1, y, x, y^2, xy, x^2, \ldots, x^{s-2}y^2$ respectively.
\end{remark}

\begin{dfn}
	The extactic divisor of a web $\mathcal W = [v]\in \mathbb PH^0(X,\Sym^d TX \otimes \mathcal M)$ with respect to a linear system $|W|$ of dimension $k$ is the divisor $\mathcal E(\mathcal W, W)$ defined by the  vanishing of
	\[
      \Delta(v)^{ k (k-1)/2} 	\cdot  \sigma_W(v) \, .
	\]
\end{dfn}

In the case $X=\mathbb{P}^2$, for  $\mathcal W=[v]\in \mathbb PH^0(\mathbb{P}^2,\Sym^d T\mathbb{P}^2(r-d))$ a $d$-web of degree $r$, we define the $n$-extactic divisor of $\mathcal{W}$, denoted by $\mathcal{E}_s(\mathcal{W})$, as the extactic divisor of the web with respect to the linear system $|W|=\mathbb{P}H^0 (\mathbb{P}^2, \mathcal{O}_{\mathbb{P}^2}(n))$.

\begin{thm}[Theorem \ref{THM:degree} of the introduction] \label{P:degree of extactic}
Let $\mathcal{W}$ be a $d$-web of degree $r$ on $\mathbb{P}^2$, then $\mathcal{E}_n (\mathcal{W})$, when different from $\mathbb{P}^2$, is a curve of degree
$$
\frac{n}{8}\cdot \left[  (n+1)(n+2)(4d + (n+3)(r-d)) + (n+3)(n^2 + 3n-2)(d-1)(d+2r)  \right].
$$
\end{thm}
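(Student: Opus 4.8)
The plan is to compute the degree of the divisor cut out by $\Delta(v)^{k(k-1)/2}\cdot \sigma_W(v)$, where $k = \dim|W| = \binom{n+2}{2}-1 = \frac{n(n+3)}{2}$ and $W = H^0(\mathbb{P}^2,\mathcal{O}(n))$. Since the degree of a divisor is the degree of the line bundle it lives in, and this line bundle is the tensor product of the line bundle for $\sigma_W(v)$ and $(k(k-1)/2)$ copies of the bundle for $\Delta(v)$, the whole computation reduces to reading off two first Chern classes and adding them. First I would record the two input bundles. By the web construction in Section \ref{S:jets}, for $v \in H^0(\mathbb{P}^2,\Sym^d T\mathbb{P}^2 \otimes \mathcal{M})$ with $\mathcal{M} = \mathcal{O}(r-d)$ and $\sigma_W \in H^0(\mathbb{P}^2, \ED{T^*\mathbb{P}^2}\otimes \mathcal{N}^{\otimes k+1})$ with $\mathcal{N} = \mathcal{O}(n)$, the action $\sigma_W(v)$ is a (meromorphic) section of $\mathcal{N}^{\otimes d(k+1)} \otimes \mathcal{M}^{\otimes m}$, where $m = k(k+1)/2$. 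Thus I would determine the degree of $\sigma_W(v)$ as a section and separately apply Lemma \ref{L:degdisc} to get that $\Delta(v)$ lives in $K_{\mathbb{P}^2}^{-d(d-1)}\otimes \mathcal{M}^{\otimes 2(d-1)}$, whose degree is immediate from $\deg K_{\mathbb{P}^2}^{-1} = 3$ and $\deg \mathcal{M} = r-d$.

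The key subtlety, and the step I expect to be the main obstacle, is correctly accounting for the poles. By construction $\sigma_W(v)$ is only meromorphic, with poles along the discriminant; Lemma \ref{L:actionwebsW} bounds the polar divisor by $\frac{k(k-1)}{2}(\Delta(v))_0$, and the Remark shows this bound is attained generically. The definition of $\mathcal{E}_n(\mathcal{W})$ multiplies $\sigma_W(v)$ by $\Delta(v)^{k(k-1)/2}$ precisely to clear these poles and obtain a genuine holomorphic section (when it is not identically zero). So the correct line bundle for the extactic divisor is
\[
\mathcal{N}^{\otimes d(k+1)} \otimes \mathcal{M}^{\otimes m} \otimes \left(K_{\mathbb{P}^2}^{-d(d-1)}\otimes \mathcal{M}^{\otimes 2(d-1)}\right)^{\otimes k(k-1)/2},
\]
and the degree of $\mathcal{E}_n(\mathcal{W})$ is the first Chern number (here, just the coefficient of the hyperplane class) of this bundle. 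Care is needed because the Remark only guarantees equality of pole orders at a \emph{general} point of a non-invariant discriminant component; I would note that the definition is set up so that the stated degree is the degree of the ambient line bundle, which is what the theorem asserts.

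Finally I would substitute $k = \frac{n(n+3)}{2}$ into the degree expression. Writing $\deg \mathcal{N} = n$, $\deg \mathcal{M} = r-d$, and $\deg K_{\mathbb{P}^2}^{-1} = 3$, the total degree becomes
\[
n\cdot d(k+1) + (r-d)\left(m + (d-1)(k-1)k\right) + 3 d(d-1)\cdot \tfrac{k(k-1)}{2},
\]
with $m = k(k+1)/2$. The remaining work is purely algebraic: expand each term as a polynomial in $n$ using $k = \frac{n(n+3)}{2}$, collect the coefficients of $d$, $(r-d)$, and $(d-1)$, and verify the result matches
\[
\frac{n}{8}\left[(n+1)(n+2)(4d+(n+3)(r-d)) + (n+3)(n^2+3n-2)(d-1)(d+2r)\right].
\]
I would organize the verification by grouping the $(d-1)$-terms (coming from the discriminant powers and the $\mathcal{M}$-contribution to the quotients $S_i/S_{i-1}$) to produce the factor $(d-1)(d+2r)$, and the remaining terms to produce the $4d+(n+3)(r-d)$ factor; the clean factorization of the answer is a good consistency check that the bookkeeping of pole orders in Lemma \ref{L:actionwebsW} has been applied correctly.
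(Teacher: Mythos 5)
Your proposal is correct and follows essentially the same route as the paper's proof: identify $\sigma_W(v)$ as a meromorphic section of $\mathcal{O}_{\mathbb{P}^2}(n)^{\otimes (k+1)d}\otimes \mathcal{O}_{\mathbb{P}^2}(r-d)^{\otimes m}$ with $k=n(n+3)/2$ and $m=k(k+1)/2$, use Lemma \ref{L:degdisc} to get $\deg \Delta(v)=(d-1)(d+2r)$, and read off the degree of the line bundle carrying the holomorphic section $\Delta(v)^{k(k-1)/2}\cdot\sigma_W(v)$. Your extra remarks on clearing poles via Lemma \ref{L:actionwebsW} make explicit what the paper leaves implicit in the definition of $\mathcal{E}(\mathcal W, W)$, and your degree expression agrees with the paper's (and with the stated formula) after the routine substitution.
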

\begin{proof}
The discriminant of the web is given by a
section of $ H^0(\mathbb{P}^2, \mathcal{O}_{\mathbb{P}^2}((d-1)(d+2r)))$ according to Lemma \ref{L:degdisc}. On  the other hand, our linear system has dimension $k=n(n+3)/2$, in particular the associated jet differential has degree $m=n(n+1)(n+2)(n+3)/8$. Therefore, $\sigma_W (v)$ is a meromorphic section of $\mathcal{O}_{\mathbb{P}^2}(n)^{\otimes (k+1)d}\otimes \mathcal{O}_{\mathbb{P}^2}(r-d)^{\otimes m}$. Finally, the extactic divisor $\mathcal{E}_s (\mathcal{W})$ is cut out by a section of $\mathcal{O}_{\mathbb{P}^2}(n)^{\otimes (k+1)d}\otimes \mathcal{O}_{\mathbb{P}^2}(r-d)^{\otimes m} \otimes \mathcal{O}_{\mathbb{P}^2}((d-1)(d+2r))^{k(k-1)/2}$.
\end{proof}

\begin{prop}\label{P:invariant curves}
Let $C$ be an irreducible curve and let $k_C$ be the dimension of the restriction to $C$ of
a  linear system $|W| \subset \mathbb P H^0(X,\mathcal N)$ of dimension $k$.
If $C$ is invariant by a $d$-web $\mathcal W$ defined by $v \in H^0(X,\Sym^d TX \otimes \mathcal L)$
then
\[
   (k - k_C) C \le \mathcal E(\mathcal W, W) \, .
\]
\end{prop}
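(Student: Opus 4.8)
The plan is to realise the multiplicity of $C$ in $\mathcal E(\mathcal W,W)$ as the order of vanishing of the defining section $\Delta(v)^{k(k-1)/2}\cdot\sigma_W(v)$ along $C$ at a general point $p\in C$, and to show this order is at least $k-k_C$. Write $C=\{y=0\}$ in local coordinates $(x,y)$ near $p$. Since the restriction $H^0(X,\mathcal N)\to H^0(C,\mathcal N|_C)$ sends $W$ onto a subspace of vector-space dimension $k_C+1$, I would first fix a basis $s_0,\dots,s_k$ of $W$ adapted to $C$: the sections $s_0,\dots,s_{k_C}$ restrict to a basis on $C$, while the remaining $k-k_C$ sections $s_{k_C+1},\dots,s_k$ vanish identically on $C$, i.e. lie in the ideal $(y)$.

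Because $C$ is invariant, at $p$ there is a branch $w$ of $\mathcal W$ tangent to $C$. The core computation is a column-divisibility fact: if $w$ is an honest (single-valued) vector field tangent to $C$, then $w$ preserves the ideal $(y)$, since $w(y)\in(y)$ forces $w(yg)\in(y)$ for every $g$; hence $w^{j}(s_l)\in(y)$ for all $j\ge0$ and all $l>k_C$. Thus in the Wronskian matrix computing $\sigma_W(w)=\det\big(w^{j}(s_l)\big)_{0\le j,l\le k}$ the $k-k_C$ columns indexed by $l>k_C$ are divisible by $y$, so $\ord_C\sigma_W(w)\ge k-k_C$. When $C$ is not a component of the discriminant of $v$ this already finishes the proof: at $p$ one has $v=v_1\cdots v_d$ with distinct honest vector fields, say $v_1=w$ tangent to $C$, the other $\sigma_W(v_j)$ are holomorphic along $C$, and $\Delta(v)$ is a local unit along $C$, whence
\[
\ord_C\big(\Delta(v)^{k(k-1)/2}\sigma_W(v)\big)=\sum_{i}\ord_C\sigma_W(v_i)\ge k-k_C .
\]

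The harder case is when $C$ is a component of the discriminant; then the branch tangent to $C$ lies in the indecomposable ramified factor $w$ of degree $r$ from the proof of Lemma~\ref{L:actionwebsW}, and column-divisibility breaks down because, on the $r$-fold cover $y=\tau^{r}$, the lift of $w$ acquires a $\partial_\tau$-component that lowers $\tau$-order. Here I would re-run the estimate of Lemma~\ref{L:actionwebsW} in the adapted basis, tracking the extra vanishing $\ord_\tau s_l\ge r$ of the columns $l>k_C$, aiming at the refined per-branch bound
\[
\ord_H\sigma_W(w_i)\ \ge\ \frac{k(k-1)}{2}\Big(\tfrac1r-1\Big)+\frac{k-k_C}{r},
\]
which improves the bound of Lemma~\ref{L:actionwebsW} by the term $\tfrac{k-k_C}{r}$ contributed by the vanishing columns. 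Summing over the $r$ branches gives $\ord_C\sigma_W(w)\ge \tfrac{k(k-1)}{2}(1-r)+(k-k_C)\ge-\tfrac{k(k-1)}{2}\,\delta+(k-k_C)$, where $\delta=\ord_C\Delta(v)\ge r-1$; since $\sigma_W(v)=\sigma_W(w)\cdot\prod_{\mathrm{unram}}\sigma_W(v_j)$ with the unramified factors of order $\ge0$, and $\Delta(v)^{k(k-1)/2}$ has order exactly $\tfrac{k(k-1)}{2}\delta$ along $C$, I again obtain $\ord_C\mathcal E(\mathcal W,W)\ge k-k_C$.

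The main obstacle is precisely this last refinement: on the ramifying cover the operator $w_i$ simultaneously creates the $\tau^{1-r}$-type poles controlled in Lemma~\ref{L:actionwebsW} and erodes the $\tau^{r}$-vanishing of the adapted sections, so the two effects must be disentangled. I expect the cleanest route to be a generalised Laplace expansion of $\sigma_W(w_i)$ along the $k-k_C$ vanishing columns, bounding each maximal minor built from those columns by combining the row-by-row order growth of Lemma~\ref{L:actionwebsW} with the initial order $\ge r$ of the vanishing entries. The simplest instances, with $s_0=1$ and $s_l\in(y)$, where one computes $\sigma_W(w_i)=c\,\tau^{k-k_C}$ on the cover, confirm that the vanishing columns do supply the required order $k-k_C$ and that the refined bound (often far from sharp) already suffices to close the argument.
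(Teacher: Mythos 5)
Your proposal is correct and follows essentially the same route as the paper: the published proof likewise takes a basis of $W$ adapted to $C$ (so that the $k-k_C$ sections $s_i = f\cdot \hat s_i$ lie in the ideal of $C$), uses invariance ($v_1(f)=L\cdot f$) to conclude that the corresponding columns of the Wronskian matrix are all divisible by a local equation of $C$, and extracts the factor $f^{\,k-k_C}$ from the determinant. For the case $C\subset\Delta(v)$ the paper says only that the argument is ``analogous using the decomposition as in the proof of Lemma~\ref{L:actionwebsW}''; your refined per-branch estimate, trading the pole bound $\tfrac{k(k-1)}{2}\bigl(\tfrac1r-1\bigr)$ of that lemma against the extra vanishing $\tfrac{k-k_C}{r}$ of the adapted columns and then against $\Delta(v)^{k(k-1)/2}$, is precisely the computation that remark leaves implicit, so on that point you have supplied more detail than the paper itself.
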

\begin{proof}
Suppose first that $C$ is not contained in  $\Delta(v)$. Note that $l:=k-k_C$ is the number of linearly independent elements of $|W|$ containing $C$. In a local coordinate system we can write $C=\{f=0\}$, $v= v_1 \cdots v_d $ and $s_0, \dots, s_k$ a basis of $W$. If $l >0$, we can choose this base such that $s_i = f. \hat{s}_i$ for $i= 0, \ldots, l-1$. By hypothesis, we can also assume that $v_1(f)=L. f$ for some holomorphic function $L$. Therefore,
\[
   \sigma_W (v_1) = \det\left(
   \begin{array}{cccccc}
      f.\hat{s}_0 & f.\hat{s}_1 & \cdots  &f.\hat{s}_{l-1 }& \cdots & s_k \\
      f. L_{1,0}  &  f.L_{1,1}   & \cdots  &f.L_{1,l-1} &\cdots &  v_1(s_k) \\
      \vdots & & & \\
      f.L_{k,0} & f.L_{k,1}&\cdots &f.L_{k,l-1}  & \cdots &  v_{1}^{(k)}(s_k) \\
   \end{array}\right)
\]
which has $f^{k-k_C}$ as a factor. The case where $C$ is contained in $\Delta(v)$  is analogous using the decomposition as in the proof of Lemma \ref{L:actionwebsW}.
\end{proof}

\begin{prop}\label{P:extW=0}
	Let $v \in H^0(X,\Sym^d TX \otimes \mathcal L)$ be a symmetric vector field with coefficients in $\mathcal L$ defining an indecomposable $d$-web $\mathcal W$.
	If $\sigma_W(v) $ vanishes identically then every leaf of $\mathcal W$ is contained in an element of the linear system $|W|$.
\end{prop}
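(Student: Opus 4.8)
The plan is to reduce the statement about the web to the already-established statement for foliations (the second Proposition above) by working over a suitable covering where the web splits into single foliations. Concretely, suppose $\sigma_W(v)$ vanishes identically. My first step is to move to the complement of the discriminant $\Delta(v)$, which is a proper closed analytic subset since the web is well-defined. Over any simply connected open set $U \subset X \setminus \Delta(v)$ the symmetric vector field factors as $v = v_1 \cdots v_d$ with each $v_i$ a genuine local section of $V = TX$, and by the definition of the action of $\sigma_W$ on symmetric vector fields we have $\sigma_W(v) = \prod_i \sigma_W(v_i)$. Since a product of holomorphic functions vanishes identically only if one factor does, on $U$ at least one factor $\sigma_W(v_i)$ vanishes identically.

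The second step is to upgrade this local vanishing to a statement about each leaf. For a fixed local factor $v_i$ defining a local foliation $\mathcal F_i$, the vanishing of $\sigma_W(v_i)$ means that along each leaf of $\mathcal F_i$ the Wronskian of the pulled-back basis $s_0, \ldots, s_k$ vanishes, so by the first Proposition (applied to the germs $f_x$ integrating $v_i$) each such leaf is contained in an element of $|W|$. Thus every local leaf of the web $\mathcal W$ over $U$ lies in some member of the linear system. The remaining work is to globalize: a leaf of $\mathcal W$ is an analytic curve that, away from $\Delta(v)$, is locally a leaf of one of the local foliation factors, and I would argue that the member of $|W|$ containing it is locally constant and hence, by irreducibility of the leaf (or by analytic continuation along the leaf), a single globally-defined element of $|W|$ containing the whole leaf. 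Its closure then still lies in that element of $|W|$ since the latter is closed.

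The main obstacle I anticipate is the monodromy/indecomposability issue, and this is precisely where the hypothesis that $\mathcal W$ is indecomposable must enter. The local factorization $v = v_1 \cdots v_d$ is not canonical: as one loops around components of $\Delta(v)$, the factors $v_i$ get permuted by the monodromy of the web. A priori the vanishing factor $\sigma_W(v_i)$ could correspond on one chart to a branch that is exchanged with a nonvanishing branch on an overlapping chart, so one cannot naively conclude that a single foliation factor globally satisfies $\sigma_W(v_i)\equiv 0$. Indecomposability guarantees that the monodromy acts transitively on the $d$ branches, so if one branch satisfies the vanishing then, by analytically continuing around loops, every branch does; equivalently, the set of branches on which $\sigma_W$ vanishes is monodromy-invariant and nonempty, hence all of them. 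This forces $\sigma_W(v_i) \equiv 0$ for every factor, so by the argument of the preceding paragraph \emph{every} leaf of $\mathcal W$ is contained in an element of $|W|$. I would make the monodromy transitivity precise by recalling that an indecomposable $d$-web has an irreducible total space of directions, on which the Galois/monodromy group of the $d$-fold covering of $X\setminus\Delta(v)$ acts transitively on fibers, and then observe that $\sigma_W$ descends to a well-defined function on this covering whose vanishing locus is saturated under the deck transformations.
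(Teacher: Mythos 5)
Your argument is correct and is essentially the paper's own proof: factor $v$ locally away from the discriminant, use that a product of holomorphic functions on a connected open set vanishes identically only if some factor does, and invoke indecomposability (transitivity of the web's monodromy, under which identical vanishing is preserved by analytic continuation) to force \emph{all} factors $\sigma_W(v_i)$ to vanish, whence every leaf lies in a member of $|W|$ by the Wronskian proposition for germs. The only case you omit is that of a leaf (invariant curve) contained in $\Delta(v)$ itself, which the paper dispatches as ``analogous'' by running the same argument on the ramified decomposition $w=\prod_i \left( a_i(x,y^{1/r})e_1+b_i(x,y^{1/r})e_2 \right)$ from the proof of Lemma \ref{L:actionwebsW}; you should add a sentence covering that case, since the statement concerns every leaf and not only those meeting $X\setminus\Delta(v)$.
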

\begin{proof}
Let $C$ be an invariant curve not contained in $\Delta(v)$, then in a neighborhood of a generic point we can decompose $v = v_1 \cdots v_d$. By assumption we have $\sigma_W (v_i) =0$  for some $i$ but the transitivity of the monodromy of the web implies that $\sigma_W (v_j) =0$ for $j=1. \ldots, d$ and therefore $C$ is an element of the linear system. The case where $C$ is contained in $\Delta(v)$ is analogous.
\end{proof}

\begin{remark}
Propositions   \ref{P:invariant curves} and \ref{P:extW=0}  provide a useful tool to bound the number of curves (in a given linear system) invariant
by a given web. This was one of the original motivations to introduce the extactic divisors for foliations, cf. \cite{MR1860669}. Another motivation
was to be able to explicitly determine the invariant curves of a given foliation. Unfortunately, in the case of webs our  approach to define the extactic
divisors does not provide explicit formulas for them.
\end{remark}

\section{Lines  on projective surfaces}\label{S:Salmon}

\subsection{Second fundamental form}
Let $X \subset \mathbb P^N$ be a submanifold. The second fundamental form of $X$ ( see \cite[Section 1.b]{MR559347} ) is a morphism of $\mathcal O_X$-modules
\[
\II: \Sym^2 TX \longrightarrow NX \, .
\]
If $v \in T_xX$ then $\II(v,v)$ is proportional to  the projection on the normal bundle of $X$ at $x$ of the osculating plane of any curve through $x$ with tangent space at $x$ genenerated by $v$. Dualizing the morphism $\II$ and tensoring the result by $NX$ we obtain  $\omega_{\II} \in H^0(X,\Sym^2 \Omega^1_X \otimes NX)$.

When $X\subset \mathbb P^3$ is a non-degenerate  surface (i.e. not contained in a plane) then the second fundamental form induces
a $2$-web $\mathcal W_{\II}$ on $X$  defined by $\omega_{\II}$.
Since $\Sym^2 \Omega^1_X \simeq K_X^{\otimes 2} \otimes \Sym^2 TX$
we obtain
\[
v_{\II} \in H^0(X,\Sym^2 TX \otimes K_X^{\otimes 2} \otimes NX)
\]
defining $\mathcal W_{\II}$ as well.

We collect in the next proposition a number of well-known properties of the second fundamental form of surfaces in $\mathbb P^3$ which will be useful in what follows.

\begin{prop}\label{P:basicII}
Let $X$ be an irreducible  surface contained in an open subset of $\mathbb P^3$. 	The following assertions hold true.
\begin{enumerate}
	\item The second fundamental form vanishes identically on $X$ if and only if $X$ is an open subset of a $\mathbb P^{2}$ linearly embedded in $\mathbb P^3$.
	\item The discriminant $\Delta(v_{\II})=\Delta(\omega_{\II})$ vanishes identically if and only $X$ is a cone or $X$ is  the tangential surface of a curve $C \subset \mathbb P^3$.
    \item If  $i:C \to X\cap \mathbb P^2$ is  the inclusion of a planar curve  satisfying $i^*\omega_{\II}=0$ then $C$ is a line or $C$ is contained in the discriminant of $\omega_{\II}$
\end{enumerate}	
\end{prop}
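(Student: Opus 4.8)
The plan is to reduce all three assertions to the linear algebra of a local lift of $X$ together with the classical identification of the second fundamental form with the differential of the Gauss map. On a smooth piece of $X$, choose local analytic coordinates $u,v$ and a local section $Z\colon U \to \mathbb{C}^4\setminus\{0\}$ of the tautological line bundle over $X$; then the affine tangent plane at a point is the $3$-plane $T=\langle Z, Z_u, Z_v\rangle \subset \mathbb{C}^4$, and $\II$ is read off from the classes of $Z_{uu},Z_{uv},Z_{vv}$ modulo $T$. Writing $\gamma\colon X \dashrightarrow (\mathbb{P}^3)^*$ for the Gauss map $x\mapsto [T_xX]$, the classical fact (see \cite{MR559347}) is that $d\gamma_x$ is identified with $\II$, so that $\ker d\gamma_x = \{w \in T_xX : \II(w,\cdot)=0\}$. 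With this dictionary assertion (1) is immediate: $\II\equiv 0$ means $Z_{uu},Z_{uv},Z_{vv}\in T$ at every point, i.e. every first derivative of the frame $(Z,Z_u,Z_v)$ stays in $T$; hence $T$ is a constant subspace of $\mathbb{C}^4$ and $X\subseteq \mathbb{P}(T)$ is an open subset of a linearly embedded $\mathbb{P}^2$. Conversely, parametrizing a plane gives $Z_{uu},Z_{uv},Z_{vv}\in T$, so $\II\equiv0$; equivalently $\II\equiv0 \Leftrightarrow d\gamma\equiv0 \Leftrightarrow \gamma$ constant $\Leftrightarrow X$ planar.

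For assertion (2), the discriminant of the binary quadratic form $\omega_{\II}$ vanishes identically exactly when $\II$ is everywhere degenerate, i.e. $\rank \II \le 1$; by (1) the rank is generically $1$ unless $X$ is a plane (a degenerate cone). Degeneracy of $\II$ is degeneracy of $d\gamma$, so the Gauss image is a curve $s\mapsto H(s)\subset(\mathbb{P}^3)^*$ and $X$ is the envelope of this $1$-parameter family of planes. Such an envelope is ruled by the characteristic lines $H(s)\cap \dot H(s)$, which are lines (intersections of two distinct planes) and are precisely the leaves of the kernel foliation $\ker \II$. The classical dichotomy of Monge then gives that either these lines are concurrent, so that $X$ is a cone, or they are tangent to their edge of regression $C=\bigcup_s H(s)\cap \dot H(s)\cap \ddot H(s)$, so that $X$ is the tangential surface of $C$. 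The converse is the direct check that along each ruling of a cone or of a tangent developable the tangent plane is constant, which forces $d\gamma$ to degenerate and hence $\Delta(\omega_{\II})\equiv0$.

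For assertion (3), $i^*\omega_{\II}=0$ says that $C$ is an asymptotic curve: along a local parametrization $f$ of $C$ one has $\II(f',f')=0$, i.e. the osculating plane of $C$ is contained in $T_{f(t)}X$. Since $C$ lies in the fixed plane $P=\mathbb{P}^2$, and an irreducible plane curve that is not a line has only finitely many inflection points, at a general point of $C$ the osculating plane of $C$ equals $P$; hence $P\subseteq T_{f(t)}X$ and thus $P=T_{f(t)}X$ there. Therefore, if $C$ is not a line, the tangent plane of $X$ is constant ($=P$) along $C$, so $\gamma$ is constant along $C$, whence $d\gamma(f')=0$ and $\II(f',\cdot)\equiv0$ on $C$. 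This makes $\II$ degenerate at every point of $C$, which is exactly the statement that $C$ is contained in the discriminant $\{\Delta(\omega_{\II})=0\}$. The only substantial input in the whole argument is the classification of developable surfaces invoked in (2); the remaining steps reduce to the frame computation above and to the standard identification of $\II$ with $d\gamma$, so I expect that classification to be the main obstacle to a fully self-contained proof.
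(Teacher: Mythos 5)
Your proposal is correct, and the route differs from the paper's in an interesting way: the paper's own proof handles items (1) and (2) purely by citation to Griffiths--Harris ((1.51) and (1.52), respectively) and only argues item (3). Your moving-frame computation is a genuine, self-contained replacement for the citation in (1); for (2) you do the honest reduction (discriminant $\equiv 0$ iff $\II$, equivalently $d\gamma$, has rank at most one, so the Gauss image is a curve and $X$ is the envelope of a one-parameter family of planes), but the classification of such envelopes into cones and tangent developables that you then invoke is precisely the content of the result the paper cites, so on that item your argument has the same logical status as the paper's, neither more nor less. The genuine divergence is the endgame of item (3). Both you and the paper make the key observation that for a planar curve $C$ which is not a line, the osculating plane at a general point is the plane $P$ containing $C$, and the condition $i^*\omega_{\II}=0$ forces this osculating plane to coincide with the projective tangent plane of $X$, so the tangent plane equals $P$ along $C$. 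The paper then concludes that $X\cap P$ is non-reduced and asserts that this puts $C$ inside the discriminant (implicitly: the tangent cone at a point of a non-reduced plane section is a double line, so the two asymptotic directions there coincide). You instead conclude that the Gauss map is constant along $C$, hence $d\gamma(f')=\II(f',\cdot)=0$, so the bilinear form $\II$ is degenerate along $C$ and the discriminant of the associated binary quadratic form vanishes there. Your version makes the final implication explicit and essentially automatic, where the paper leaves it to the reader; the paper's version stays within the geometry of plane sections, which meshes with how the discriminant reappears later in its Salmon argument. Both are valid and rest on the same classical identification of $\II$ with the differential of the Gauss map.
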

\begin{proof}
Item (1) is proved in   \cite[(1.51)]{MR559347}. For item (2) see (1.52) loc. cit. Let's proof item (3). Assume $C \subset X \cap \mathbb P^2$ is not a line
and satisfies $i^* \omega_{\II} =0$. Therefore for a general point of $C$, its osculating plane is tangent to $X$.  It follows that the intersection of $X$ and
and the $\mathbb P^2$ containing $C$ is non-reduced and consequently  $C$ is contained in the discriminant of $\omega_{\II}$.
\end{proof}

\subsection{Salmon's Theorem}

\begin{thm}
 Let $X \subset \mathbb P^3$ be an irreducible  surface of degree $d$.
 If $X$  is not a ruled surface then there exists  $s \in H^0(X, \mathcal O_X(11d -24))$
 cutting out all  lines contained in $X$. In particular the number of lines contained  in a smooth projective surface
 of degree $d\ge 3$  is at most $d(11d - 24)$.
\end{thm}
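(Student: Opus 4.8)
\subsection*{Proof proposal}

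The plan is to identify the lines on $X$ as components of the classical flecnodal curve, which I would realize as an abnormal-contact divisor for the asymptotic web $\mathcal{W}_{\II}$. First I would record two facts about a line $L\subset X$. Its tangent direction is asymptotic, since $L$ has infinite (in particular $\ge 3$) order of contact with $X$, so $\omega_{\II}$ vanishes along $L$ and $L$ is a leaf of $\mathcal{W}_{\II}$; moreover the same line realizes contact $\ge 4$ with $X$, so every point of $L$ is flecnodal. Hence $L$ lies in the flecnodal locus, and it suffices to exhibit $s\in H^0(X,\mathcal{O}_X(11d-24))$ whose zero divisor is the flecnodal curve and which does not vanish identically when $X$ is not ruled.

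To build $s$, write $Q=\omega_{\II}\in H^0(X,\Sym^2\Omega^1_X\otimes NX)$ for the binary quadratic form defining $\mathcal{W}_{\II}$. Along an asymptotic curve $f$ one has $\omega_{\II}(f',f')\equiv 0$; differentiating gives $(\nabla_{f'}\omega_{\II})(f',f')+2\,\omega_{\II}(f'',f')=0$, and a short computation with a defining equation of $X$ shows that contact $\ge 4$ of the tangent line with $X$ is equivalent to $\omega_{\II}(f'',f')=0$, hence to $(\nabla_{f'}\omega_{\II})(f',f')=0$. Thus, setting $C\in H^0(X,\Sym^3\Omega^1_X\otimes NX)$ to be the symmetric cubic extracted from $\nabla\omega_{\II}$, a point is flecnodal exactly when the binary forms $Q$ and $C$ have a common root in $\mathbb{P}(T_xX)$. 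The flecnodal curve is therefore cut out by the resultant $\mathrm{Res}(Q,C)$. Since two connections differ by a term in $\Omega^1_X\otimes\mathrm{End}(NX)$, which alters $C$ only by a multiple $LQ$ of $Q$, and $\mathrm{Res}(Q,C+LQ)=\mathrm{Res}(Q,C)$, I may use local connections: the local resultants are independent of the choice and glue to a global section $s$.

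The line bundle follows from the standard computation of the resultant of a binary quadratic with coefficients in $A$ and a binary cubic with coefficients in $B$, which is a section of $K_X^{\otimes 6}\otimes A^{\otimes 3}\otimes B^{\otimes 2}$. Here $A=B=NX$, so
\[
\mathrm{Res}(Q,C)\in H^0\!\left(X,\,K_X^{\otimes 6}\otimes NX^{\otimes 5}\right)=H^0\!\left(X,\mathcal{O}_X(6(d-4)+5d)\right)=H^0\!\left(X,\mathcal{O}_X(11d-24)\right),
\]
using $K_X=\mathcal{O}_X(d-4)$ and $NX=\mathcal{O}_X(d)$. As a consistency check, the same recipe applied to the discriminant of $Q$ recovers the class $\mathcal{O}_X(4d-8)$ of the parabolic curve predicted by Lemma \ref{L:degdisc}.

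It remains to see that $s\not\equiv 0$ when $X$ is not ruled, and this is the main obstacle. If $s\equiv 0$ then at every point one of the two asymptotic directions is flecnodal, giving a direction field whose asymptotic curves have contact $\ge 4$ with their tangent lines everywhere; the classical fact is that this forces those curves to be lines, so $X$ is ruled. I expect the cleanest way to install this in the present framework is an integrability argument in the spirit of Proposition \ref{P:extW=0}, using the transitivity of the web monodromy to promote the pointwise condition to a full leaf and then invoking Proposition \ref{P:basicII}(3) to conclude that the leaf is a line. Granting non-vanishing, every line is a component of the effective divisor $Z=(s)_0$ of class $\mathcal{O}_X(11d-24)$; intersecting with a general plane gives $\deg Z=\mathcal{O}_X(1)\cdot\mathcal{O}_X(11d-24)=d(11d-24)$, and since distinct lines are distinct irreducible components of degree one, $X$ contains at most $d(11d-24)$ lines.
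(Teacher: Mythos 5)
Your construction is genuinely different from the paper's: you aim to build Salmon's flecnodal divisor directly, as the resultant of the second fundamental form $Q=\omega_{\II}$ and a cubic form $C$, landing in $\mathcal O_X(11d-24)$ in one step, whereas the paper forms the extactic divisor of $\mathcal W_{\II}$ with respect to the net of planes through a general point $p$ (a section of $\mathcal O_X(13d-26)$), proves it is nonzero, and then divides by the square of the ramification divisor of the projection from $p$ to reach degree $11d-24$. However, your globalization of $C$ has a genuine gap. To differentiate $Q\in H^0(X,\Sym^2\Omega^1_X\otimes NX)$ you need a connection on all of $\Sym^2\Omega^1_X\otimes NX$, not only on $NX$; two such connections differ by a section of $\Omega^1_X\otimes\mathrm{End}(\Sym^2\Omega^1_X\otimes NX)$, and only the $\mathrm{End}(NX)$-part of that ambiguity changes $C$ by a term of the form $LQ$. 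A change $A\in\Omega^1_X\otimes\mathrm{End}(\Omega^1_X)$ of the connection in the cotangent factor changes $C(v,v,v)$ by a term proportional to $Q(A_vv,v)$, which at a root $v$ of $Q$ need not vanish and need not be a multiple of $Q$: for instance, if $\alpha,\beta$ is a local coframe, $Q=\alpha\beta$, and $A$ is chosen so that $A_vv=\beta(v)^2e_1$ with $e_1$ dual to $\alpha$, the change is $\beta^3$, which is not of the form $L\alpha\beta$, and $\mathrm{Res}(\alpha\beta,\,C)\neq\mathrm{Res}(\alpha\beta,\,C+\beta^3)$ in general. So your local resultants do depend on the choice of local connection and need not agree on overlaps. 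What is classically true is that the Fubini cubic computed in frames adapted to the embedding $X\subset\mathbb P^3$ is well defined up to $\lambda C+LQ$; establishing that transformation law (via moving frames or Monge normal form) is precisely the content your argument omits.

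The second gap is the one you flag yourself: the non-vanishing of $s$ when $X$ is not ruled. The ``classical fact'' you invoke --- that a surface all of whose points are flecnodal is ruled --- is a genuine theorem (in modern form, Landsberg's result that a line having contact of order at least $4$ with $X$ at a general point must lie in $X$), and it cannot be imported from Proposition \ref{P:extW=0} as you suggest: that proposition applies to $\sigma_W(v)$ for a linear system $|W|$ on $X$, while your flecnodal condition concerns contact with the lines of $\mathbb P^3$, which do not form a linear system. This is exactly why the paper works with the planes through a general point $p$: there Proposition \ref{P:extW=0} does apply, identical vanishing would force a general leaf of $\mathcal W_{\II}$ to be a planar invariant curve, hence a line by Proposition \ref{P:basicII}(3), contradicting non-ruledness; the cost of that choice is the larger degree $13d-26$, which the paper recovers by showing its section vanishes to order $2$ along the ramification divisor $R$ (cut out by $r\in H^0(X,\mathcal O_X(d-1))$) and dividing by $r^2$. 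To complete your route you must either prove the Landsberg-type statement or identify your resultant with the paper's $s/r^2$ --- and either of these is the substantive part of the proof, not a formality.
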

\begin{proof}
	Let $p \in \mathbb P^3$ be a general point and let $|W|$
	be the linear system of the restriction to $X$ of hyperplanes containing $p$.  Consider the invariant jet differential associated to $|W|$, $\sigma_W \in H^0(X,\ED[2,3]{T^*X}\otimes \mathcal O_X(3))$. The action of $\sigma_W$ in $v_{II}$ gives us a meromorphic section of $\mathcal O_X(3)^{\otimes 2}\otimes (K_X^{\otimes 2} \otimes NX)^{\otimes 3}$ then the divisor $\mathcal{E}(\mathcal W_{\II}, W)$ is given by a holomorphic section $s$ of
	\[
	\mathcal O_X(3)^{\otimes 2}\otimes (K_X^{\otimes 2} \otimes NX)^{\otimes 3} \otimes \underbrace{K_X^{\otimes 2} \otimes NX^{\otimes 2}}_{\Delta(v_{II})^{\frac{2(2-1)}{2}}}
	  \simeq \mathcal O_X( 13 d -26  ) \, .
	\]
	
	Basic properties of the second fundamental form implies that every line contained in $X$ is invariant by it.  Proposition  \ref{P:invariant curves} implies that $s$ vanishes on  every line.
	
	Since $X$ is not uniruled by assumption, Proposition \ref{P:basicII} item (2) implies that the discriminant of $\omega_{\II}$ is not identically zero. If $s$ vanishes identically then Proposition \ref{P:extW=0} implies that through a general point of $X$ there exists a planar curve invariant by $\mathcal W_{\II}$. But planar curves invariant by $\mathcal W_{\II}$ and not contained in the discriminant of $\omega_{\II}$ are lines.
	This contradicts our assumption on the non-uniruledness of $X$, proving that $s$ is not identically zero.
	
	Consider now the linear projection $\pi :\mathbb P^3 \dashrightarrow \mathbb P^2$ with center at $p$. Its restriction to $X$, still denoted by $\pi$, has ramification divisor $R$ cut out by  $r \in H^0(X,\mathcal O_X(d-1))$.
	We claim that $(s)_0 \ge 2 R$. Fix a general point $x$ of $R$. At a neighborhood of it write $v_{\II} = w_1 \cdot w_2$. The  orbits of $w_1, w_2$  will have contact of order at least three with the element of $|W|$ tangent to $X$ at $x$. This is sufficient to show that the rank at $x$  of the matrix defining $\sigma_{W}(w_1)$ and $\sigma_{W}(w_2)$  is at most two. The claim follows.
	
	To conclude the proof of Salmon's Theorem it suffices to divide $s$ by $r^2$ to obtain a holomorphic section of $\mathcal O_X( 11d -24)$ vanishing along all the lines contained in $X$.
\end{proof}

The proof above is not very different of Salmon's proof. The divisor defined by $s$ coincides with the flecnodal divisor studied by Salmon.
Indeed, according to  \cite[page 138]{MR0007286}, every plane containing one of the null directions of $\II$ at a  point $p \notin \Delta(\II)$,
except the tangent plane, intersects the surface at a a planar curve having an inflection at $p$.  The vanishing of $\sigma(v_{\II})$ at $p$ implies that one of the asymptotic curves through 	$p$
order of contact at $p$ with this planar curve at least $3$. It follows that $p$ is also an inflection for the asymptotic curve. For a modern version of
Salmon's argument see \cite[Section 8]{MR3291856}.

Darboux \cite[p. 372]{zbMATH02708650} shows that through a general point there are exactly $27$ conics (curves)
which have abnormal contact with a surface $X\subset \mathbb P^3$. Thus there exists a $27$-web which
is tangent to every conic contained in $X$. Control on the normal bundle of this web
(i.e. a formula linear in the degree of $X$) would give a bound on the number of conics on a surface.

\section{Involutive lines}\label{S:involutive}

\subsection{Bound on the number of involutive lines}
Consider the projective space $\mathbb P^3$ endowed with
a contact structure $\mathcal C$ induced  by a constant  symplectic form $\sigma$  on $\mathbb C^4$.
If $\sigma = \sum_{i,j=0}^3 \lambda_{ij} dx_i \wedge dx_j$ is a symplectic form on $\mathbb C^4$ then
the associated contact structure $\mathcal C= \mathcal C_{\sigma}$ is defined by $\omega \in H^0 (\mathbb P^3, \Omega^1_{\mathbb P^3}(2))$
which in homogeneous coordinates can be written as
\[
\omega = i_R \sigma \, ,
\]
where $R= \sum_{i=0}^3 x_i \frac{\partial}{\partial x_i}$ is the radial (or Euler's) vector field and $i_R$ stands for the
interior product with $R$. A similar construction endows a contact structure over the projective space $\mathbb P^{{2m+1}}$.

A reduced and irreducible curve $C \subset \mathbb P^3$ is called an involutive curve (with respect to a contact distribution $\mathcal C$)
if $i^* \omega \in H^0(C_{sm},\Omega^1_{C_{sm}}(2))$ vanishes identically. Here $C_{sm}$ denotes the smooth locus of $C$ and $i: C_{sm} \to \mathbb P^3$ is the inclusion.

As in the introduction let $\ell_i(d)$ be  the number of involutive lines a degree $d$ smooth surface in $\mathbb P^3$ can have.

\begin{thm}[Theorem \ref{T:bound} of the introduction]
If $X \subset \mathbb P^3$ is a smooth surface of degree $d\ge 3$ in $\mathbb P^3$
then the number of involutive lines in $X$ is at most $3d^2 -4d$, i.e. $\ell_i(d) \le 3d^2-4d$. Moreover,
\[
\overline{ \ell_i} = \limsup_{d\to \infty} \frac{\ell_i(d)}{d^2} \in [1,3] \,
\]
\end{thm}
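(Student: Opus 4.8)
The plan is to adapt the strategy used in the proof of Salmon's Theorem, replacing the second fundamental form web $\mathcal{W}_{\II}$ with a web encoding the involutive (contact) condition. Given the contact form $\omega \in H^0(\mathbb P^3,\Omega^1_{\mathbb P^3}(2))$, I would first restrict it to $X$ to produce a section $i^*\omega \in H^0(X,\Omega^1_X(2))$; a line $L \subset X$ is involutive precisely when $i^*\omega$ vanishes along $L$. The key geometric observation is that the points where $\omega$ annihilates a tangent direction of $X$ cut out a distribution: at each $x\in X$, the tangent plane $T_xX$ meets the contact plane $\mathcal C_x$ in a line (generically), so $\ker(i^*\omega)$ defines a \emph{foliation} $\mathcal{G}$ (not a genuine $2$-web) on $X$ whose leaves are the curves tangent to the contact distribution. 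An involutive line must be a leaf of $\mathcal{G}$, hence invariant.

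Next I would set up an extactic-type divisor for this foliation with respect to the linear system $|W|$ of lines, i.e. the restriction to $X$ of the pencil of planes through a general point $p \in \mathbb P^3$, so that $k=1$ and $\sigma_W$ becomes an order-one jet differential detecting contact with lines. Using the general machinery of Section~\ref{S:extactic} (in particular Proposition~\ref{P:invariant curves}), the vanishing locus of $\sigma_W(v_{\mathcal G})$ produces a section $s$ of an explicit line bundle on $X$ that vanishes along every involutive line. Computing the degree of this bundle via $K_X \simeq \mathcal{O}_X(d-4)$, $NX \simeq \mathcal{O}_X(d)$, and the twist $\mathcal{O}_X(2)$ coming from $\omega$, and then dividing out any fixed component forced to appear (analogous to the ramification divisor $R$ in Salmon's proof, here coming from the locus where $T_xX \subset \mathcal C_x$, i.e. where $X$ is tangent to the contact structure), I would extract the numerical bound $3d^2-4d$. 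Establishing that $s$ is not identically zero — which amounts to showing a general smooth surface is not swept out by involutive curves lying in the pencil — is where I would invoke the non-degeneracy of the contact structure together with an argument ruling out such a one-parameter family.

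For the asymptotic statement, the lower bound $\overline{\ell_i}\ge 1$ requires exhibiting, for arbitrarily large $d$, smooth surfaces carrying on the order of $d^2$ involutive lines; I would look for explicit families (for instance surfaces adapted to a fixed symplectic form, where the involutive condition becomes a polynomial constraint one can satisfy along many lines simultaneously) and count. The upper bound $\overline{\ell_i}\le 3$ is immediate from $\ell_i(d)\le 3d^2-4d$. The main obstacle I anticipate is twofold: first, correctly accounting for the fixed locus that must be divided out of $s$, since an over- or under-estimate here changes the leading coefficient, and getting the contact-tangency divisor's degree exactly right is the delicate bookkeeping step; second, on the constructive side, producing enough explicit involutive lines to secure $\overline{\ell_i}\ge 1$, as lower bounds of this type typically demand a clever concrete geometry rather than a general principle.
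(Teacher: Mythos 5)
Your upper-bound strategy is exactly the one the paper itself considers and sets aside in the remark immediately following the theorem, because it provably yields a weaker bound. Concretely: (i) the planes through a general point $p\in\mathbb P^3$ restrict to $X$ as a \emph{net}, a linear system of dimension $k=2$, not a pencil with $k=1$; with $k=1$ (planes through a fixed line) the order-one jet differential only measures tangency of the contact foliation $\mathcal F$ with members of the pencil, and a line $L\subset X$ invariant by $\mathcal F$ is generically not tangent to any pencil member (in the notation of Proposition \ref{P:invariant curves}, $k-k_L=0$), so involutive lines would not even lie in your divisor. (ii) With the correct $k=2$, the extactic divisor of $\mathcal F$ is cut out by a section of $\mathcal O_X(3)\otimes(T^*\mathcal F)^{\otimes 3}=\mathcal O_X(3)\otimes\mathcal O_X(d-2)^{\otimes 3}=\mathcal O_X(3d-3)$, giving only $\ell_i(d)\le 3d^2-3d$. (iii) Your proposed rescue, dividing out a fixed component supported on the locus where $T_xX\subset\mathcal C_x$, cannot work: that locus is precisely the singular set of $\mathcal F=\ker i^*\omega$, which by Lemma \ref{L:isolated} is zero-dimensional on a smooth surface of degree $\ge 3$, so it supports no divisor. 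There is also no analogue of the component $2R$ from Salmon's proof: that component appears because both \emph{asymptotic} directions have contact of order $\ge 3$ with the tangent plane at a ramification point, whereas the contact direction generically has contact exactly $2$ there.

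The missing idea is to use two tangential structures simultaneously rather than a foliation and a linear system. The paper's proof takes the tangency locus between $\mathcal F$ and the second fundamental form web $\mathcal W_{\II}$: every line in $X$ is an asymptotic curve, hence invariant by $\mathcal W_{\II}$, and an involutive line is in addition invariant by $\mathcal F$, so all involutive lines lie in the divisor obtained by restricting $\omega_{\II}$ to $T\mathcal F$, a section of $(T^*\mathcal F)^{\otimes 2}\otimes NX\simeq K_X^{\otimes 2}\otimes\mathcal O_X(2)^{\otimes 2}\otimes NX\simeq\mathcal O_X(3d-4)$; this gives $3d^2-4d$ at once, with nothing to divide out. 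Finally, your treatment of the lower bound $\overline{\ell_i}\ge 1$ is only a declaration of intent; the paper realizes it with the explicit surfaces $\{p(x_0,x_1)+q(x_2,x_3)=0\}$, on which the $d^2$ lines $\{p_i=q_j=0\}$, joining the roots of the two binary forms, are involutive for the contact form $i_R(dx_0\wedge dx_1+dx_2\wedge dx_3)$; without such a construction that half of the limsup statement remains unproved.
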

\begin{proof}
The restriction of the contact form  to $X$ gives a non-zero section $\omega$ of $\Omega^1_X\otimes \mathcal O_X(2)$.
Every involutive line contained in $X$ is invariant by the corresponding foliation $\mathcal{F}$.

The tangency locus between $\mathcal{F}$ and $\mathcal{W}_{\Pi}$ is cut out by a section of
\[
K_X^{\otimes 2} \otimes \mathcal O_X(2)^{\otimes 2} \otimes NX =
\mathcal O_X(3d-4).
\]
 Since it must contain every involutive line inside $X$, the first part of the theorem follows. For the last part take a surface $X$ in $\mathbb P^3$ of degree $d \ge 3$  defined by a homogenous polynomial of the form $p(x_0,x_1) + q(x_2,x_3)$, then $X$ has at least $d^2$ involutive lines with respect to the contact form $i_R(dx_0\wedge dx_1 + dx_2 \wedge dx_3)$. In fact, since $p$ and $q$ are binary forms they can factored as a product of $d$ linear forms, say $p(x_0,x_1) = \prod_{i=1}^d p_i(x_0,x_1)$ and
$q(x_2,x_3) =  \prod_{i=1}^d q_i(x_2,x_3)$. The lines $\{ p_i(x_0,x_1) = q_j(x_2,x_3) = 0 \}$ are all involutive and contained in $X$.
\end{proof}

\begin{remark}
As before, let $|W|$ be the linear system of the restriction to $X$ of hyperplanes containing a general point $p \in \mathbb{P}^3$. Then the extatic divisor of $\mathcal{F}$ with respect to $|W|$ contains every involutive line and is given by a section of $\mathcal{O}_X(3) \otimes \mathcal{O}_{X}(d-2)^{\otimes 3}=\mathcal{O}_X(3d-3)$ which gives us a worst bound for the number of involutive lines.
\end{remark}

\subsection{Pairwise disjoint involutive lines}

\begin{thm}[Theorem \ref{T:D} of the introduction]
If $X \subset \mathbb P^{2m+1}$ is a smooth hypersurface of degree $d\ge 3$ in $\mathbb P^{2m+1}$
then the maximal number of pairwise disjoint involutive $m$-planes in $X$ is at most $(d-1)^{m+1} + 1$. Moreover, when $m=1$ and $d \ge 6$ the bound is sharp.
\end{thm}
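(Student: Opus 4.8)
The plan is to bound the number $N$ of pairwise disjoint involutive $m$-planes by localizing the degeneracy scheme of the restricted contact form at the isolated points it cuts out on each plane, and then using disjointness to add these contributions inside a single global Chern number. Write $H=c_1(\mathcal O_X(1))$, so that $H^{2m}=d$, and set $q=d-1$. Restricting the contact form gives $\omega_X\in H^0(X,\Omega^1_X\otimes\mathcal O_X(2))$; regarded as a morphism $\omega_X\colon TX\to\mathcal O_X(2)$ its zero scheme $Z=Z(\omega_X)$ is the locus where $X$ is tangent to the contact distribution. As $\Omega^1_X(2)$ is a quotient of $\Omega^1_{\mathbb P^{2m+1}}(2)|_X$ it is globally generated, and since it has rank $2m=\dim X$ its top Chern number governs $Z$; from
\[
c\big(\Omega^1_X(2)\big)=\frac{(1+H)^{2m+2}}{(1+2H)\,(1+(2-d)H)}
\]
one computes $\int_X c_{2m}(\Omega^1_X(2))=\sum_{j=0}^{2m+1}q^{\,j}$.

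Fix one involutive $P\cong\mathbb P^m\subset X$ and let $h$ be its hyperplane class. Involutivity means $i^*\omega_X=0$, so the restricted morphism $\omega_X|_P\colon TX|_P\to\mathcal O_P(2)$ annihilates $TP$ and therefore descends to $\bar\omega_P\colon N_{P/X}\to\mathcal O_P(2)$, i.e. $\bar\omega_P\in H^0(P,N^*_{P/X}\otimes\mathcal O_P(2))$. A point of $P$ lies in $Z$ exactly when $\omega_X$ vanishes there as a covector, which is exactly where the normal component $\bar\omega_P$ vanishes; hence $Z(\bar\omega_P)=P\cap Z$ as subschemes of $P$. From the normal bundle sequence $0\to N_{P/X}\to\mathcal O_P(1)^{m+1}\to\mathcal O_P(d)\to0$ one gets $c(N^*_{P/X}(2))=(1+h)^{m+1}/(1+(2-d)h)$, so provided $P\cap Z$ is $0$-dimensional it has length
\[
\operatorname{length}(P\cap Z)=\int_P c_m\big(N^*_{P/X}\otimes\mathcal O_P(2)\big)=\sum_{j=0}^{m}q^{\,j}.
\]

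Now suppose $P_1,\dots,P_N$ are pairwise disjoint involutive $m$-planes. Since $P_i\cap P_j=\varnothing$ the finite subschemes $P_i\cap Z$ are pairwise disjoint, so they lie in disjoint portions of the zero scheme of the single section $\omega_X$ of the globally generated bundle $\Omega^1_X(2)$. Global generation guarantees that the total length of the isolated zeros cannot exceed the top Chern number (any positive-dimensional part of $Z$ away from the $P_i$ contributes non-negatively), whence
\[
N\cdot\sum_{j=0}^{m}q^{\,j}\ \le\ \int_X c_{2m}\big(\Omega^1_X(2)\big)=\sum_{j=0}^{2m+1}q^{\,j}.
\]
Dividing the two geometric sums gives $N\le(q^{2m+2}-1)/(q^{m+1}-1)=q^{m+1}+1=(d-1)^{m+1}+1$, the asserted bound. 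The main obstacle is the hypothesis that each $P_i\cap Z$ is finite: if it failed, $X$ would be tangent to the contact distribution along a positive-dimensional part of $P_i$ and the plane would sink into $Z$, so one must exclude this (it amounts to excluding that $\nabla F$ is proportional to the symplectic gradient along the Lagrangian span of $P_i$) and, correspondingly, control any divisorial vanishing of $\omega_X$. Everything else is Chern-class bookkeeping, so this finiteness is the delicate point.

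For sharpness when $m=1$ and $d\ge6$, note that the bound reads $(d-1)^2+1=d(d-2)+2$, which is exactly the optimal number of skew lines produced by the extremal surfaces of Rams \cite{MR2085146}. The task is therefore to exhibit, for each such $d$, a smooth surface of degree $d$ together with a constant symplectic form $\sigma$ on $\mathbb C^4$ for which all $d(d-2)+2$ pairwise disjoint lines are involutive, i.e. whose underlying $2$-planes in $\mathbb C^4$ are simultaneously Lagrangian for $\sigma$. Equivalently, in the language of the upper bound, all $d((d-1)^2+1)$ singularities of the foliation defined by $\omega_X$ must lie on these disjoint Lagrangian lines. Since each line imposes a single linear condition $\sigma|_{\tilde P_i}=0$ on the six-dimensional space of alternating forms on $\mathbb C^4$, the lines must be placed so that these conditions are highly dependent; verifying that the extremal skew-line configurations can be arranged in such special position, and that the resulting $\sigma$ is non-degenerate, is the heart of this half and realizes the upper bound exactly.
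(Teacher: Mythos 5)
Your upper-bound skeleton is the same as the paper's (restrict the contact form to $X$, use the conormal sequence $0 \to N^*P(2) \to {\Omega^1_X}_{|P}(2) \to \Omega^1_P(2) \to 0$ to produce $\bar\omega_P \in H^0(P,N^*_{P/X}(2))$, and compare the two Chern numbers $\sum_{j=0}^{m}q^j$ and $\sum_{j=0}^{2m+1}q^j$), but the step you yourself call ``the delicate point'' is a genuine gap, and the workaround you propose does not close it. The paper proves (Lemma \ref{L:isolated}) that for \emph{any} smooth hypersurface of degree $d\ge 3$ the zero scheme $Z$ of $i^*\omega$ is zero-dimensional: at a zero of $i^*\omega$ the covector $\omega$ annihilates $T_xX$, hence is proportional to $dF$; along a positive-dimensional component of $Z$ this gives $dF = s\cdot\omega$ with $s$ a section of $\mathcal O(d-2)$, which must vanish somewhere since $d\ge 3$, forcing $dF$ to vanish and contradicting smoothness of $X$. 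You do not prove this, and your substitute principle --- ``global generation guarantees that the total length of the isolated zeros cannot exceed the top Chern number, any positive-dimensional part of $Z$ away from the $P_i$ contributing non-negatively'' --- is not enough even if granted: nothing ensures that the positive-dimensional part of $Z$ stays \emph{away} from the $P_i$. A positive-dimensional component of $Z$ can cross $P_i$ transversally, so that $P_i\cap Z$ is finite (your finiteness hypothesis holds) yet its points are not isolated points of $Z$; the non-negative excess contribution of that component to $c_{2m}(\Omega^1_X(2))$ is a fixed number that need not dominate $\sum_i \operatorname{length}(P_i\cap Z)$. (Toy example of the failure of the principle: a global vector field on $\mathbb P^2$ vanishing on a line $L$ and at one point contributes $2$ for $L$ and $1$ for the point to $c_2(T\mathbb P^2)=3$, while arbitrarily many disjoint curves can each meet $L$.) So without the finiteness lemma the inequality $N\sum_{j=0}^m q^j \le \sum_{j=0}^{2m+1}q^j$ simply does not follow; this lemma is the content of the theorem, not a footnote to it.

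The sharpness half for $m=1$, $d\ge 6$ is also not proved: you correctly match the bound $(d-1)^2+1 = d(d-2)+2$ with Rams' number of skew lines and correctly state what must be exhibited (a symplectic form making all of Rams' lines Lagrangian), but you explicitly leave that exhibition as ``the heart of this half.'' The paper finishes it concretely: on Rams' surface $S_d = \{x_0^{d-1}x_1 + x_1^{d-1}x_2 + x_2^{d-1}x_3 + x_3^{d-1}x_0 = 0\}$ the $d(d-2)+2$ skew lines $(\alpha x : \beta y : x : y)$, with $\alpha = -\beta^{d-1}$ and $\beta^{(d-1)^2+1} = (-1)^d$, are all involutive for the contact form $i_R(dx_0\wedge dx_2 + dx_1\wedge dx_3)$; indeed each such line is spanned by $(\alpha,0,1,0)$ and $(0,\beta,0,1)$, on which $dx_0\wedge dx_2 + dx_1\wedge dx_3$ visibly vanishes. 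Both halves of your proposal therefore stop exactly where the actual work lies.
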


 In order to prove the theorem we need the following lemma.

\begin{lemma}\label{L:isolated}
If $X\subset \mathbb P^{2m+1}$ is a smooth hypersurface  of degree at least $3$, then pull-back of the contact form
$\omega \in H^0 (\mathbb P^{2m+1}, \Omega^1_{\mathbb P^{2m+1}}(2))$ to $X$ vanishes exactly at a subscheme of dimension zero and length
$\frac{(d-1)^{2m+2} -1  }{d -2  }$ isolated singularities.
\end{lemma}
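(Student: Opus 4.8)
The plan is to reinterpret the zero scheme of $i^*\omega$ first geometrically and then as the fixed locus of an explicit self-map of $\Pj^{2m+1}$, to compute its length by intersection theory, and to treat finiteness as a separate (and decisive) point.

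\emph{Geometric reformulation.} A point $x=[v]\in X$ lies in the zero scheme $Z$ of $i^*\omega$ exactly when $\omega_x$ vanishes on $T_xX$, i.e. $T_xX\subseteq\ker\omega_x=\mathcal C_x$; since both are hyperplanes of dimension $2m$ this forces $T_xX=\mathcal C_x$. Writing $X=\{F=0\}$ with $F$ homogeneous of degree $d$, and $\sigma(a,b)=\langle Ja,b\rangle$ for the invertible antisymmetric matrix $J$ of $\sigma$, the tangent hyperplane is $\ker\langle\nabla F(v),-\rangle$ and the contact hyperplane is $\ker\langle Jv,-\rangle$ (both functionals kill $v$, by Euler and by $\sigma(v,v)=0$). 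Thus $T_xX=\mathcal C_x$ iff $\nabla F(v)$ and $Jv$ are proportional, i.e. $\nabla F(v)=\lambda\,Jv$ for some $\lambda\in\C$. Equivalently, $[v]$ is a fixed point of the self-map $\Psi=[\,J^{-1}\circ\nabla F\,]:\Pj^{2m+1}\to\Pj^{2m+1}$. Since $X$ is smooth, $\nabla F$ has no zero off the origin, so $\Psi$ is a genuine morphism of degree $d-1$; and because $\langle\nabla F(v),v\rangle=d\,F(v)$ while $\langle Jv,v\rangle=0$, every fixed point of $\Psi$ automatically satisfies $F(v)=0$. Hence $Z=\mathrm{Fix}(\Psi)\subset X$.

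\emph{Length.} Granting for the moment that $Z$ is finite, I would compute its length by intersecting the graph $\Gamma_\Psi$ with the diagonal $\Delta$ inside $\Pj^{2m+1}\times\Pj^{2m+1}$. The two projections pull the hyperplane classes back to $H$ and $(d-1)H$ along $\Gamma_\Psi\cong\Pj^{2m+1}$, and $[\Delta]=\sum_i h_1^i h_2^{2m+1-i}$, so
\[
\mathrm{length}(Z)\;=\;[\Gamma_\Psi]\cdot[\Delta]\;=\;\sum_{i=0}^{2m+1}(d-1)^{i}\;=\;\frac{(d-1)^{2m+2}-1}{d-2}.
\]
As an independent check one gets the same number as $\int_X c_{2m}(\Omega^1_X\otimes\mathcal O_X(2))$, using the Euler and adjunction sequences to get $c(\Omega^1_X)=(1-h)^{2m+2}/(1-dh)$, the splitting formula $c_{2m}(E\otimes L)=\sum_i c_i(E)\,c_1(L)^{2m-i}$, and $\int_X h^{2m}=d$ (for $m=1$ both routes give $d^3-2d^2+2d$).

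\emph{Finiteness, the main obstacle.} Everything above rests on showing $Z$ is $0$-dimensional. Pure contact geometry yields only a partial bound: along any component $W$ of $Z$ the two-form $i^*d\omega=2\,i^*\sigma$ is symplectic on $\mathcal C_x=T_xX$, while the vanishing of $i^*\omega$ on $W$ forces $T_xW$ to be isotropic for it, so $\dim W\le m$. To push this down to $\dim W=0$ I would differentiate the defining relation $\nabla F(v)=\lambda(v)\,Jv$ along the cone over $W$: writing $H=\mathrm{Hess}(F)(v)$ one obtains, for every tangent direction $u$,
\[
(H-\lambda J)\,u\;\in\;\C\cdot Jv,\qquad Hv=(d-1)\lambda\,Jv,
\]
the second identity again by Euler. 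For $d\ge 3$ and $x$ smooth (so $\lambda\neq 0$) these say that the pencil $H-\mu J$ degenerates at the two \emph{distinct} values $\mu=\lambda$ and $\mu=(d-1)\lambda$ simultaneously over the positive-dimensional family $W$. I expect this to be incompatible with the smoothness of $X$, and isolating precisely why — the one place where the hypotheses $d\ge 3$ and $\nabla F\neq 0$ are used in full strength — is the delicate heart of the argument; once finiteness is secured, the length formula of the previous step completes the proof.
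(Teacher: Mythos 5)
Your proposal is incomplete at precisely the point you yourself flag as ``the delicate heart of the argument'': the $0$-dimensionality of the zero scheme. The lemma makes two assertions --- finiteness and the length formula --- and you prove only the second, conditionally on the first. The length count is correct granted finiteness, though note that the graph--diagonal intersection computes the length of the fixed-point scheme of $\Psi$ inside $\mathbb P^{2m+1}$, whose scheme structure you never identify with that of the zero scheme of $i^*\omega$ on $X$; it is your ``independent check'' $\int_X c_{2m}(\Omega^1_X\otimes\mathcal O_X(2))$ (which is exactly the paper's computation) that directly yields the stated length of $(i^*\omega)_0$. But your finiteness argument, via the pencil $H-\mu J$ degenerating at the two values $\lambda$ and $(d-1)\lambda$ over a positive-dimensional family, ends with ``I expect this to be incompatible with the smoothness of $X$'' --- an expectation, not a proof. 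That is a genuine gap, and it is the decisive part of the lemma: without it neither Chern-class count computes anything.

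The frustrating part is that your own reformulation already contains the missing step, and the Hessian detour is unnecessary. Suppose $W$ is a positive-dimensional irreducible component of the zero locus. Along the cone over $W$ you have $\nabla F(v)=\lambda(v)\,Jv$ with $Jv$ nowhere zero (as $J$ is invertible and $v\neq 0$), so $\lambda$ is a regular function on the punctured cone over $W$, homogeneous of degree $(d-1)-1=d-2$; that is, $\lambda$ defines a section of $\mathcal O_W(d-2)$. Since $d\ge 3$ and $W$ is projective of positive dimension, this section of an ample line bundle must vanish at some point of $W$, and at such a point $\nabla F(v)=\lambda(v)\,Jv=0$, contradicting the smoothness of $X$. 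This is word for word the paper's proof, phrased in your coordinates: along a positive-dimensional component $Z$ one writes $dF|_Z=s\cdot\omega|_Z$ with $s\in H^0(Z,\mathcal O_Z(d-2))$, and a zero of $s$ produces a singular point of $X$. The isotropy bound $\dim W\le m$ and the identity $Hv=(d-1)\lambda\,Jv$ play no role once this is observed.
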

\begin{proof}
Let $i: X \to \mathbb P^{2m+1}$ be the inclusion and suppose $i^* \omega \in H^0(X, \Omega^1_X(2))$ has non isolated singularities.
If $F \in \mathbb C[x_0, \ldots, x_{2m+1}]$ is a homogeneous polynomial of degree $d$ defining $X$, then
$dF_{|X}$ can be interpreted as sections of $\Omega^1_{\mathbb P^{2m+1}}(d)_{|X}$.
If  $Z$ is a positive dimensional irreducible component of  the singular set of $i^* \omega$ then the restriction
of $\omega$  to $Z$ must be proportional to the restriction of $dF$ to $Z$. Since $\omega$ has no singularities on $\mathbb P^{2m+1}$ we can write
$dF_{|Z} = s \cdot \omega_{|Z}$ where $s \in H^0(Z,\mathcal O_Z(d-2))$. If $d>2$ then $s$ vanishes on hypersurface of $Z$ and the same holds true for
$dF$. It follows that $X$ is singular along the zero locus of $s$ contrary to our assumptions.

It remains to determine the length of the zero scheme of $i^* \omega$, which can be done by computation the top Chern class  of $\Omega^1_X(2)$.
If $h=c_1(\mathcal O_X(1))$ then the Chern polynomial of $\Omega^1_X(2)$ is given by
\[
\frac{c({\Omega^1_{\mathbb P^{2m+1}}}_{|Y}(2))}{c(\mathcal O_X(2-d))} = \frac{ ( 1+h)^{2m+2}}{(1+2h)(1 - (d-2)h)}\, ,
\]
and the top Chern class of $\Omega^1_X(2)$ is $d=h^{2m}$ times the coefficient of $h^{2m}$.
Therefore
\[
c_{2m}(\Omega^1_X(2)) = d \left( \sum_{i=0}^{2m} \sum_{j=0}^{2m-i} \binom{2m+2}{i}(-2)^{j} (d-2)^{2m-i-j}  \right) \, .
\]
One can verify by induction that this last  quantity is equal to  $\frac{(d-1)^{2m+2} -1  }{d -2}$ as wanted.
\end{proof}

\begin{proof}[Proof of Theorem \ref{T:D}]
Let $j: P \to X$ be a linear inclusion of an $m$-plane in $X$, and $i: X \to \mathbb P^{2m+1}$ the inclusion of $X$ in $\mathbb P^n$.
Consider the exact sequence
\[
0 \to N^* P(2) \to {\Omega^1_X}_{|P}(2) \to \Omega^1_P(2) \to 0  \, .
\]
Since $j^* i^* \omega =0$, it follows that $(i^* \omega)_{|P}$ is the image of a certain $\sigma \in H^0( P , N^* P(2))$.
The intersection of the singular set of $i^*\omega$ with $P$ coincides (set-theoretically) with the singular set of $\sigma$. Moreover,
as a simple local computation shows, the length of zero scheme of $\sigma$ is at least the length of the restriction of the zero scheme of $i^* \omega$
to any neighborhood of $P$. If $N$ is the number of pairwise disjoint $m$-planes in $X$ then we can write
\[
 N c_m(N^*P(2)) \le c_{2m}(\Omega^1_X(2)) \, .
\]
But $c(N^*P(2)) = c({\Omega^1_X}_{|P}(2)) \cdot c(\Omega^1_P(2))^{-1}$ from which we deduce
\[
c(N^*P(2)) = \frac{ (1+h)^{2m+2} } { (1+2h)(1-(d-2)h)} \cdot \frac {1+ 2h }{(1+h)^{m+1} } = \frac{ (1+h)^{m+1} } { (1-(d-2)h)}
\]
where $h = c_1(\mathcal O_P(1))$. The coefficient of $h^m$ is exactly $\frac{(d-1)^{m+1} -1  }{d -2}$. This computation together with
Lemma \ref{L:isolated} implies
\[
N \le (d-1)^{m+1} +1
\]
as claimed. This concludes the firts part of Theorem \ref{T:D}. The following example from \cite{MR2085146} will show that the bound is sharp when $m=1$. Let us consider the surface
$$
S_{d} = \{ x_{0}^{d-1}x_1  + x_{1}^{d-1}x_2 + x_{2}^{d-1}x_3 + x_{3}^{d-1}x_0 =0\}
$$
where $d \geq 6$. Then $S_d$ contains the $d(d-2)+2$ skew lines $(\alpha x: \beta y : x : y)$ for $(\alpha, \beta)$ satisfying $\alpha = -\beta^{d-1}$, $\beta^{(d-1)^{2}+1} = (-1)^{d}$. Moreover, these lines are involutive with respect to the contact form $i_R(dx_0\wedge dx_2 + dx_1 \wedge dx_3)$.

\end{proof}

\end{document}